\newtheorem{theorem}{Theorem}[section]
\newtheorem{proposition}[theorem]{Proposition}
\newtheorem{definition}[theorem]{Definition}
\newtheorem{lemma}[theorem]{Lemma}
\newtheorem{corollary}[theorem]{Corollary}
\newtheorem{remark}[theorem]{Remark}
\numberwithin{equation}{section}
\numberwithin{theorem}{section}
\newcommand{\mc}[1]{{\mathcal #1}}
\newcommand{\bb}[1]{{\mathbb #1}}
\newcommand{\eps}{\varepsilon}
\newcommand{\res}{\mathop{\hbox{\vrule height 7pt width .5pt depth
               0pt\vrule height .5pt width 6pt depth 0pt}}\nolimits}
\newcommand{\upbar}[1]{\,\overline{\! #1}}
\newcommand{\id}{{1 \mskip -5mu {\rm I}}}
\title[Quasi-potentials of entropies for conservation
laws]{Quasi-potentials of the entropy functionals for scalar
  conservation laws}
\author [G.\ Bellettini]{Giovanni Bellettini}
\address{Giovanni
  Bellettini, Dipartimento di Matematica, Universit\`a di Roma `Tor
  Vergata', Via della Ricerca Scientifica 00133 Roma, Italy}
\email{Giovanni.Bellettini@lnf.infn.it}
\author [F.\ Caselli]{Federica Caselli}
\address{Federica Caselli,
  Dipartimento di Ingegneria Civile, Universit\`a di Roma `Tor Vergata',
  Via del Politecnico 1 00100 Roma, Italy}
\email{caselli@ing.uniroma2.it}
\author [M.\ Mariani]{Mauro Mariani}
\address{Mauro Mariani, CEREMADE, UMR-CNRS 7534, Universit\'e de
Paris-Dauphine,
Place du Marechal de Lattre de Tassigny, F-75775 Paris Cedex 16}
\email{mariani@ceremade.dauphine.fr}
\keywords{Quasi-potential, conservation laws, entropy}
\subjclass{Primary 35L65; Secondary 49S05}
\begin{document}
\begin{abstract}
  We investigate the quasi-potential problem for the entropy cost
  functionals of non-entropic solutions to scalar conservation laws
  with smooth fluxes, as defined in \cite{J00,V04,BBMN08,M08}. We
  prove that the quasi-potentials coincide with the integral of a
  suitable Einstein entropy.
\end{abstract}
\maketitle

\section{Introduction}
\label{s:1}
For a real function $f$, consider the scalar conservation law in the
unknown $u \equiv u(t,x)$
\begin{equation}
\label{e:1.1}
  u_t + f(u)_x =0
\end{equation}
where $t \in [0,T]$ for some $T>0$, $x \in \bb T$ (the one-dimensional
torus), and subscripts denote partial derivatives. Equation
\eqref{e:1.1} does not admit in general classical solutions for the
associated Cauchy problem, even if the initial datum is smooth. On the
other hand, if $f$ is non-linear, there exist in general infinitely
many weak solutions. An admissibility condition, the so-called
entropic condition, is then required to recover uniqueness for the
Cauchy problem in the weak sense \cite{D05}. The unique solution
satisfying such a condition is called the \emph{Kruzkhov solution}.

A classical result \cite[Chap. 6.3]{D05} states that the Kruzkhov
solution can be obtained as limit for $\eps \downarrow 0$ of the
solution $u^\eps$ to the Cauchy problem associated with the equation
\begin{equation}
\label{e:1.2}
  u_t+f(u)_x= \frac{\eps}2 \big( D(u)u_x \big)_x
\end{equation}
provided that the initial data also converge. Here the \emph{diffusion
  coefficient} $D$ is a uniformly positive smooth function, and we
remark that convergence takes place in the strong $L_p([0,T]\times \bb
T)$ topology. The Kruzkhov solution to \eqref{e:1.1} has also been
proved to be the hydrodynamical limit of the empirical density of
stochastic particles systems under hyperbolic scaling, when the number
of particles diverges to infinity \cite[Chap.\ 8]{KL99}. These results
legitimize the Kruzkhov solution as the physically relevant solution
to \eqref{e:1.1}, and the entropic condition as the appropriate
selection rule between the infinitely many weak solutions to
\eqref{e:1.1}. 

Provided the flux $f$ and the diffusion coefficient $D$ are chosen
appropriately (depending on the particles system considered), one may
say that \eqref{e:1.2} is a continuous version for the evolution of
the empirical density of particles system, in which the small
stochastic effects are neglected (or averaged) and $\eps$ is the
inverse number of particles. The convergence of both \eqref{e:1.2} and
the empirical measure of the density of particles to the same solution
of \eqref{e:1.1} confirms somehow that this approximation is reliable.

In \cite{J00,V04}, the long standing problem of providing a large
deviations principle for the empirical measure of the density of
stochastic particles systems under hyperbolic scaling is addressed. In
particular, the \emph{totally asymmetric simple exclusion process} is
investigated (which in particular corresponds to $f(u)=u(1-u)$ in the
hydrodynamical limit equation \eqref{e:1.1}), and the large deviations
result partially established. Roughly speaking, when the number of
particles $N$ diverges to infinity, the asymptotic probability of
finding the density of particles in a small neighborhood of a path
$u:[0,T]\times \bb T \to \bb R$ is $e^{-N\, H^{JV}(v)}$, where
$H^{JV}$ is a suitable large deviations rate functional (see
Section~\ref{s:2}).

A continuous mesoscopic mean field counterpart of this large
deviations result is provided in \cite{BBMN08,M08}. In \cite{M08} a
large deviations principle for a stochastic perturbation to
\eqref{e:1.2} (driven by a \emph{fluctuation coefficient} $\sigma$) is
investigated in the limit of jointly vanishing stochastic noise and
(deterministic) diffusion. In \cite{BBMN08} a purely variational
problem is addressed, namely the investigation of the $\Gamma$-limit
of a family of functionals $H_\eps$ associated with \eqref{e:1.2} (see
Section~\ref{s:2}). The candidate large deviations functional $H$
introduced in \cite{M08} and the candidate $\Gamma$-limit introduced
in \cite{BBMN08} coincide, and in the case $f(u)=u(1-u)$ they are
expected to coincide with the functional $H^{JV}$ introduced in
\cite{J00,V04} (the equality can be proved on functions of bounded
variations, but it is missing in the general case). The functional $H$
thus provides a generalization of the functional $H^{JV}$, for
arbitrary fluxes $f$ (in particular, not necessarily convex or
concave), diffusion coefficients $D$ and fluctuation coefficients
$\sigma$. The functionals $H_\eps$, $H$ and $H^{JV}$ are nonnegative;
$H_\eps$ vanishes only on solutions to \eqref{e:1.2}, so that $H_\eps$
can be interpreted as the cost of violating the flow \eqref{e:1.2}. On
the other hand, $H$ and $H^{JV}$ are $+\infty$ off the set of weak
solutions to \eqref{e:1.1}, they vanish only on Kruzkhov solution
to \eqref{e:1.1}, and thus they can be interpreted as the cost of
violating the entropic condition for the flux
\eqref{e:1.1}. Section~\ref{s:2} of the paper is devoted to the
precise definition of the functionals $H_\eps$, $H$ and $H^{JV}$.

Redirecting the reader to Section~\ref{s:3} for a more detailed
discussion, here we briefly recall a general definition of the
quasi-potential associated with a family of functionals. Suppose we
are given a topological space $U$, and for each $T>0$ a set $\mc X_T
\subset C\big([0,T];U\big)$ and a functional $I_T:\mc X_T \to
[0,+\infty]$. For the sake of simplicity, let us also fix a point $m
\in U$. The quasi-potential $V:U \to [0,+\infty]$ associated with
$\{I_T\}$ is then defined as $V(u):= \inf_{T>0} \inf_{w} I_T(w)$,
where the infimum is carried over all the $w \in \mc X_T$ such that
$w(0)\equiv m$ and $w(T)=u$. A natural choice for the reference point
$m$ should be an attractive point for the minima of the functionals
$I_T$ (see e.g.\ Theorem~\ref{t:conve} and Theorem~\ref{t:infconv} for
the case of $H_\eps$, $H$ and $H^{JV}$). Indeed, in such a case, the
investigation of the quasi-potential is a classical subject both in
dynamical optimal control theory and in large deviations theory, as it
quantifies ``the cheapest cost'' to move from the stable point $m$ to
a general one $u$. Moreover, from the optimal control theory point of
view, the quasi-potential describes the long time limit of the
functionals $I_T$, see e.g.\ \cite{BP}. Furthermore, there is a broad
family of stochastic processes for which the quasi-potential is
expected to be the large deviations rate functional of their invariant
measures, provided $I_T$ is the large deviations rate functional of
the laws of such processes up to time $T>0$ (see e.g.\
\cite[Theorem~4.4.1]{FW84} for the classical finite-dimensional case,
and \cite{BFG} for a more general discussion and applications to
particles systems). Moreover, see \cite[Chap.\ 4]{FW84}, the
quasi-potential of the large deviations rate functionals provides a
valuable tool to investigate long time behavior of the processes
(e.g.\ average time to be waited for the process to leave an
attractive point, and the path to follow when the process performs
such a deviation). Finally, in the context of non-equilibrium
statistical mechanics in which the functionals $H_\eps$, $H$ and
$H^{JV}$ have been introduced, the quasi-potential has been proposed
as a dynamical definition of the free energy functional for systems
out of equilibrium \cite{BDGJL}.

Since $H_\eps$ is a functional associated with a
control problem (see \cite{BBMN08}) and it can be also retrieved as
large deviations rate functional of some particles systems (e.g.\
weakly asymmetric particle systems, see \cite{KL99}) and stochastic
PDEs (see \cite{M08b}), the quasi-potential problem is relevant for
such a functional. Similarly, $H$ and $H^{JV}$ are the (candidate)
large deviations rate functionals for both particles systems processes
and stochastic PDEs, see \cite{J00,V04,M08}.

Given a bounded measurable map, $u_i:\bb T \to \bb R$, it is well
known that the (entropic) solutions to the Cauchy problems for
\eqref{e:1.1} and \eqref{e:1.2} with initial datum $u_i$ will converge
to the constant $m = \int_{\bb T} dx\,u_i(x)$, namely constant profiles
are attractive points for the zeros of the functionals $H_\eps$, $H$
and $H^{JV}$. Given $m\in \bb R$, and two positive, smooth maps on
$\bb T$, interpreted as the diffusion coefficient $D$ and fluctuation
coefficient $\sigma$, the \emph{Einstein entropy} is defined as the
unique nonnegative function $h_m$ on $\bb R$ such that $h_m(m)=0$ and
$\sigma h_m''=D$. In this paper, we establish an explicit formula for
the quasi-potential problem associated with the functionals $H_\eps$,
$H$ and $H^{JV}$ (which of course will depend on a time parameter $T$)
with reference point the constant maps on the torus, proving that
these three quasi-potential functionals coincide and are equal to the
integral of the Einstein entropy. More precisely, given $u_f \in
L_\infty(\bb T)$, the quasi-potential $V(m,u_f)$ of $H_\eps$, $H$ and
$H^{JV}$ with reference constant $m\in \bb R$ is equal to $\int_{\bb
  T}dx\,h_m(u_f(x))$ if $\int_{\bb T}dx\, u_f(x) = m$ and it is
$+\infty$ otherwise (see Theorem~\ref{t:quasipot}).

As remarked above, both the large deviations results in
\cite{J00,V04,M08} and the $\Gamma$-limit results in \cite{BBMN08} are
incomplete, due to little knowledge of structure theorems and
regularity results for weak solutions to conservation laws
\eqref{e:1.1}. These results on the quasi-potential give therefore an
additional heuristic argument supporting the actual identification of
$H$ as the $\Gamma$-limit of $H_\eps$. Similarly, since it is easily
seen that the large deviations rate functional (in the hydrodynamical
limit) of the invariant measures of the totally asymmetric simple
exclusion process is also given by the integral of the Einstein
entropy, these results also support the conjecture that $H$ and
$H^{JV}$ may coincide at least in the case $f(u)=u(1-u)$ and that they
are in fact the large deviations rate functional of the totally
asymmetric simple exclusion process. Finally, we remark that the
integral of the Einstein entropy is expected to rule the long time
behavior of well-behaving physical systems, and the result provided
in this paper thus also supports the universality of the Jensen and
Varadhan functional $H^{JV}$ (or in general of $H$) as a relevant
universal \emph{entropy functional} for asymmetric conservative,
closed physical systems.

\section{Preliminaries}
\label{s:2}
Our analysis will be restricted to equibounded ``densities'' $u$, and
for the sake of simplicity we let $u$ take values in $[-1,1]$. Let $U$
denote the compact Polish space of measurable functions $u:\bb T \to
[-1,1]$, equipped with the $H^{-1}(\bb T)$ metric
\begin{equation*}
d_U:=\sup \big\{
       \langle u,\varphi\rangle,\,\varphi \in 
       C^\infty_{\mathrm{c}}\big( \bb T\big),\, 
           \langle \varphi_x, \varphi_x\rangle + \langle \varphi ,\varphi
\rangle=1 
            \big\}
\end{equation*}
where $\langle \cdot,\cdot\rangle$ denotes the scalar product in
$L_2(\bb T)$. Given $T>0$, let $\mc X_T$ be the Polish space
$C\big([0,T]; U\big)$ endowed with the metric
\begin{equation*}
  d_{\mc X_{T}}(u,v):= \sup_{t\in[0,T]} d_U\big( u(t),v(t) \big) +
\|u-v\|_{L_1([0,T]\times \bb T)}
\end{equation*}

Hereafter we assume $f$ a Lipschitz function on $[-1,1]$. Moreover we
let $D,\,\sigma \in C([-1,1])$ with $D$ uniformly positive, and
$\sigma$ strictly positive in $(-1,1)$.

\subsection{The functional $H_{\eps}$}
\label{ss:2.1}
For $\eps>0,\,T>0$, we define $H_{\eps;T}: \mc X_{T} \to [0,+\infty]$
as (hereafter we may drop the explicit dependence on integration
variables inside integrals when no misunderstanding is possible)
\begin{equation}
\label{e:2.3}
H_{\eps;T} (u):=
\begin{cases}
{\displaystyle
\! \sup_{\varphi\in C^\infty_{\mathrm{c}}((0,T)\times \bb T)}
 \!\! \!\! \eps^{-1}\Big[ - \int_{0}^{T}\!\!\!\!
            dt\, \langle  u,\varphi_t\rangle 
        + \langle f(u),\varphi_x\rangle
        - \frac{\eps}2 \langle D(u) u_x,
                           \varphi_x\rangle}
\\
\qquad \qquad {\displaystyle
   - \frac 12 \int_{0}^{T}\!dt\,\langle \sigma(u)\varphi_x ,
                      \varphi_x \rangle \Big]
}
\qquad \text{if $u_x \in
  L^2([0,T]\times \bb T)$}
\\
+\infty\,
\qquad  \text{otherwise}
\end{cases}
\end{equation}
Note that $H_{\eps;T}(u)=0$ iff $u \in \mc X_{T}$ is a
weak solution to \eqref{e:1.2}. $H_{\eps;T}$ is a
lower-semicontinuous and coercive functional on $\mc X_{T}$ (see
\cite[Proposition 3.3, Theorem 2.5]{BBMN08}). Moreover if
$H_{\eps;T}(u)<+\infty$ then $u \in C\big([0,T];L^1(\bb T)
\big)$ (see \cite[Lemma 3.2]{BBMN08}).

\subsection{Entropy-measure solutions}
\label{ss:2.2}
We say that $u \in \mc X_{T}$ is a weak solution to \eqref{e:1.1} iff
for each $\varphi\in C^\infty_{\mathrm{c}}\big((0,T)\times \bb R\big)$
\begin{equation*}
  \int_{0}^{T}\!dt\, \langle u,\varphi_t\rangle
  + \langle f(u),\varphi_x\rangle  = 0
\end{equation*}
A function $\eta \in C^2([-1,1])$ is called an \emph{entropy}, and its
\emph{conjugated entropy flux} $q \in C([-1,1])$ is defined up to an
additive constant by $q(w):=\int^w\!dv\,\eta'(v)f'(v)$. For $u \in \mc
X_{T}$ a weak solution to \eqref{e:1.1}, for $(\eta,q)$ an entropy --
entropy flux pair, the \emph{$\eta$-entropy production} is the
distribution $\wp_{\eta,u}$ acting on
$C^\infty_{\mathrm{c}}\big((0,T)\times\bb R\big)$ as
\begin{equation*}
\wp_{\eta,u}(\varphi):= 
    - \int_{0}^{T}\!dt\,\langle \eta(u) , \varphi_t \rangle
    + \langle  q(u) , \varphi_x \rangle
\end{equation*}
Let $C^{2,\infty}_{\mathrm{c}}\big([-1,1]\times (0,T)\times \mathbb{T}
\big)$ be the set of compactly supported maps
$\vartheta:[-1,1]\times(0,T)\times \bb R \ni (v,t,x) \mapsto
\vartheta(v,t,x) \in \bb R$, that are $C^2$ in the $v$ variable, with
derivatives continuous up to the boundary of $[-1,1]\times(0,T)\times
\bb T$, and $C^\infty$ in the $(t,x)$ variables. For $\vartheta \in
C^{2,\infty}_{\mathrm{c}}\big([-1,1]\times (0,T)\times \bb T \big)$
let $\vartheta'$ and $\vartheta''$ denote its partial derivatives with
respect to the $v$ variable. We say that a function $\vartheta \in
C^{2,\infty}_{\mathrm{c}}\big([-1,1]\times (0,T)\times \bb T \big)$ is
an \emph{entropy sampler}, and its \emph{conjugated entropy flux
  sampler} $Q:[-1,1]\times (0,T)\times \bb T$ is defined up to an
additive function of $(t,x)$ by $Q(w,t,x):=\int^w\vartheta'(v,t,x)
f'(v) dv$. Finally, given a weak solution $u$ to \eqref{e:1.1}, the
\emph{$\vartheta$-sampled entropy production} $P_{\vartheta,u}$ is the
real number
\begin{equation}
\label{e:2.5}
P_{\vartheta,u}
:=-\int_{(0,T)\times \bb T} \! dt\,dx\,
  \Big[\big(\partial_t \vartheta)\big(u(t,x),t,x \big) 
       + \big(\partial_x Q\big)\big(u(t,x),t,x \big)\Big]
\end{equation}
If $\vartheta(v,t,x)=\eta(v) \varphi(t,x)$ for some entropy $\eta$ and
some $\varphi \in C^{\infty}_{\mathrm{c}}\big((0,T)\times \bb T \big)$,
then $P_{\vartheta,u}=\wp_{\eta,u}(\varphi)$.

The next proposition introduces a suitable class of solutions to
\eqref{e:1.1} which will be needed in the sequel. We denote by
$M_T$ the set of finite measures on $(0,T)
\times \bb T$ that we consider equipped with the weak* topology. In
the following, for $\varrho \in M_T$ we
denote by $\varrho^\pm$ the positive and negative part of
$\varrho$.

\begin{proposition} \cite[Proposition 2.3]{BBMN08}, \cite{DOW}.
\label{p:kin}
Let $u \in \mc X_T$ be a weak solution to \eqref{e:1.1}. The
following statements are equivalent:
\begin{itemize}
\item[{\rm (i)}]{for each entropy $\eta$, the $\eta$-entropy
    production $\wp_{\eta,u}$ can be extended to a Radon measure on
    $(0,T)\times \bb T$, namely
    $\|\wp_{\eta,u}\|_{\mathrm{TV}}:=
    \sup \big\{\wp_{\eta,u}(\varphi),\,\varphi \in
               C^\infty_{\mathrm{c}}\big((0,T)\times \bb T \big),\,
               |\varphi|\le 1
         \big\} <+\infty$.}
     \item[{\rm (ii)}]{there exists a bounded measurable map
         $\varrho_u:[-1,1] \ni v \to \varrho_u(v;dt,dx) \in M_T$ such
         that for any entropy sampler $\vartheta$
\begin{equation}
\label{e:2.6}
P_{\vartheta,u} = \int_{[-1,1]\times(0,T) \times \bb T} \! dv\,
          \varrho_u(v;dt,dx)\,\vartheta''(v,t,x)
\end{equation}
}
\end{itemize}
\end{proposition}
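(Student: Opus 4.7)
The implication (ii) $\Rightarrow$ (i) is immediate. For any entropy $\eta$ and $\varphi\in C^\infty_{\mathrm c}((0,T)\times\bb T)$, the product $\vartheta(v,t,x):=\eta(v)\varphi(t,x)$ is an entropy sampler with $\vartheta''(v,t,x)=\eta''(v)\varphi(t,x)$ and $P_{\vartheta,u}=\wp_{\eta,u}(\varphi)$. Formula \eqref{e:2.6} then yields $|\wp_{\eta,u}(\varphi)|\le \|\eta''\|_\infty\|\varphi\|_\infty\int_{-1}^{1}\|\varrho_u(v)\|_{\mathrm{TV}}\,dv$, which is finite by boundedness of $\varrho_u$; taking the supremum over $|\varphi|\le 1$ shows that $\wp_{\eta,u}$ extends to a Radon measure on $(0,T)\times\bb T$.

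For (i) $\Rightarrow$ (ii), the plan is to realize $\wp_{\eta,u}$ as the $\eta''$-moment of a single signed measure on $[-1,1]\times(0,T)\times\bb T$. Since $u$ is a weak solution, $\wp_{\eta,u}=0$ for affine $\eta$; fixing the gauge $\eta_g(-1)=\eta_g'(-1)=0$ therefore yields a well-defined linear map $\Lambda\colon C([-1,1])\to M_T$ by $\Lambda(g):=\wp_{\eta_g,u}$, whose range lies in $M_T$ by (i). The closed graph theorem implies $\Lambda$ is bounded: if $g_n\to g$ uniformly and $\Lambda(g_n)\to\mu$ in total variation, dominated convergence applied to $\Lambda(g_n)(\varphi)=-\int\eta_{g_n}(u)\varphi_t+q_{g_n}(u)\varphi_x$ yields $\Lambda(g_n)(\varphi)\to\Lambda(g)(\varphi)$ for every $\varphi\in C^\infty_{\mathrm c}$, so $\mu=\Lambda(g)$. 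Therefore $\|\Lambda(g)\|_{\mathrm{TV}}\le C\|g\|_\infty$, and the bilinear form $B(g,\varphi):=\Lambda(g)(\varphi)$ corresponds, via Riesz representation on the locally compact product, to a finite signed Radon measure $\mu$ on $[-1,1]\times(0,T)\times\bb T$.

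To promote $\mu$ to the Lebesgue-disintegrated form \eqref{e:2.6}, one shows that $\mu$ is absolutely continuous in the $v$-variable. Dominated convergence extends $\Lambda$ to a bounded map $L^\infty([-1,1])\to M_T$ consistently with the $L^\infty$-quotient: if $g=0$ almost everywhere then $\eta_g\equiv 0\equiv q_g$ identically, hence $\Lambda(g)=0$. Consequently $\mu(A\times E)=0$ whenever $|A|=0$ and $E\subset(0,T)\times\bb T$ is Borel. The disintegration theorem then produces a measurable family $\varrho_u\colon[-1,1]\to M_T$ with $\mu=\varrho_u(v;dt,dx)\,dv$ and $\int_{-1}^{1}\|\varrho_u(v)\|_{\mathrm{TV}}\,dv=\|\mu\|_{\mathrm{TV}}<+\infty$. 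Identity \eqref{e:2.6} holds by construction on product samplers $\vartheta=\eta(v)\varphi(t,x)$, and extends to general $\vartheta\in C^{2,\infty}_{\mathrm c}$ by density of finite tensor sums in a topology for which both sides are jointly continuous.

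The main obstacle is verifying the absolute continuity of the $v$-marginal of $\mu$: conceptually this is the structural content of the kinetic formulation for entropy-measure solutions, while the remaining passages are essentially functional-analytic consequences of (i) (closed graph, Riesz representation, disintegration). A subtlety to handle carefully is the extension of $\Lambda$ from $C([-1,1])$ to $L^\infty([-1,1])$: since $C$ is not norm-dense in $L^\infty$, the extension must be performed via a.e.-convergent (say mollified) approximations, with the TV bound preserved by weak$^*$ lower semicontinuity in $M_T$.
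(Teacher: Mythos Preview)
The paper does not supply its own proof of this proposition; it is quoted with attribution to \cite[Proposition~2.3]{BBMN08} and \cite{DOW}, so there is no in-paper argument to compare your proposal against. Your plan is the natural functional-analytic repackaging of the kinetic-formulation argument in those references, and the direction (ii)$\Rightarrow$(i) together with the closed-graph step for $\Lambda:C([-1,1])\to M_T$ are correct.

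Two points in your (i)$\Rightarrow$(ii) outline need tightening. First, the sentence ``the bilinear form $B(g,\varphi):=\Lambda(g)(\varphi)$ corresponds, via Riesz representation on the locally compact product, to a finite signed Radon measure $\mu$'' is not justified by the bilinear bound alone: $|B(g,\varphi)|\le C\|g\|_\infty\|\varphi\|_\infty$ controls only tensor products, not $|\sum_i B(g_i,\varphi_i)|$ in terms of $\|\sum_i g_i\otimes\varphi_i\|_\infty$, which is what Riesz on $C_0([-1,1]\times(0,T)\times\bb T)$ would require. In the references this is handled by writing out the explicit kinetic identity, e.g.
\[
B(g,\varphi)=-\int g(v)\Big[(u-v)^+\,\varphi_t+\big(f(u)-f(v)\big)\id_{\{v\le u\}}\,\varphi_x\Big]\,dv\,dt\,dx,
\]
which already exhibits $B$ as the action of a single distribution on the product space; one then shows that distribution is a measure. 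Second, your disintegration yields $\int_{-1}^{1}\|\varrho_u(v)\|_{\mathrm{TV}}\,dv<\infty$, i.e.\ $\varrho_u\in L^1([-1,1];M_T)$, whereas the statement asserts that $v\mapsto\varrho_u(v)$ is a \emph{bounded} map into $M_T$. The $L^\infty$ bound does not follow from the closed-graph estimate $\|\Lambda(g)\|_{\mathrm{TV}}\le C\|g\|_\infty$ by itself (mollified Kruzkhov entropies have $\|\eta''\|_\infty\to\infty$), and in \cite{DOW,BBMN08} it is obtained from the kinetic structure rather than from abstract functional analysis. You should either supply that argument or weaken the conclusion to integrability, which is in any case all that is used downstream in this paper (e.g.\ in \eqref{e:2.7} and \eqref{e:5.2}).
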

We say that a weak solution $u \in \mc X_{T}$ is an
\emph{entropy-measure solution} to \eqref{e:1.1} iff it satisfies the
equivalent conditions of Proposition~\ref{p:kin}. The set of
entropy-measure solutions to \eqref{e:1.1} is denoted by $\mc E_{T}
\subset \mc X_{T}$. In general $\mc E_{T} \nsubseteqq BV([0,T] \times
\mathbb{T})$, the main regularity result for $\mc E_T$ being $\mc
E_{T} \subset C\big([0,T];L^1(\mathbb{T})\big)$, provided $f \in
C^2([-1,1])$ is such that there is no interval in which $f$ is affine
(see \cite[Lemma 5.1]{BBMN08}). A \emph{Kruzkhov solution} to
\eqref{e:1.1} is a weak solution $u \in C\big([0,T];L_1(\bb T)\big)$
such that $\wp_{\eta,u} \le 0$ in distributional sense, for each
convex entropy $\eta$. Since a weak solution $u$ such that
$\wp_{\eta,u} \le 0$ can be shown to be an entropy-measure solution,
the entropic condition is equivalent to $\varrho_u(v;dt,dx) \le 0$ for
a.e. $v \in [-1,1]$.

\subsection{\textbf{$\Gamma$-entropy cost of non-entropic solution}}
\label{ss:2.3}
For $T>0$, we introduce the functional $H_{T}:\mc X_{T} \to
[0,+\infty]$ as
\begin{equation}
  \label{e:2.7}
H_{T}(u):=
 \begin{cases}
  \displaystyle \int \!dv\,\frac{D(v)}{\sigma(v)}\varrho_u^+(v;dt,dx)
           & \quad \text{if $u\in \mc E_{T}$}
 \\
+ \infty  &\quad \text{otherwise}
  \end{cases}
\end{equation}
In \cite[Proposition 2.6]{BBMN08} it is proved that $H_{T}$ is
coercive and lower semicontinuous, and that it vanishes only on
Kruzkhov solutions to \eqref{e:1.1}.

As noted in \cite[Remark 2.7]{BBMN08}, if $u \in \mc X_{T} \cap
BV([0,T]\times \bb {T})$ is a weak solution to \eqref{e:1.1}, then
$u \in \mc E_{T}$. Let $J_u$ be the jump set of $u \in \mc
E_{T}\cap BV([0,T]\times \bb {T})$, $\mc H^1 \res J_u$ the
one-dimensional Hausdorff measure restricted to $J_u$ and, at a point
$(s,y) \in J_u$, let $n=(n^t,n^x)\equiv n(s,y)$ be the normal to $J_u$
and $u^-\equiv u^-(s,y)$ (respectively $u^+\equiv u^+(s,y)$) be the
left (respectively right) trace of $u$ (these are well defined $\mc
H^1 \res J_u$ a.e., since $n^x$ can be chosen uniformly positive, see
\cite[Remark 2.7]{BBMN08}). Then
\begin{equation}
\label{e:2.8}
H_{T}(u) = \int_{J_u} \!d \mc H^1\,|n^x| 
   \int \!dv\,\frac{D(v)}{\sigma(v)} \frac{\rho^+(v,u^+,u^-)}{|u^+-u^-|}
\end{equation}
where
\begin{eqnarray}
\label{e:2.8.5}
\nonumber
 \rho(v,u^+,u^-)& := & \big[f(u^-)(u^+ -v) + f(u^+)(v-u^-)
\\
& & \phantom{ \big[}
                          -f(v)(u^+-u^-) \big]
    \id_{[u^- \wedge u^+,u^- \lor u^+]}(v)
\end{eqnarray}
and $\rho^+$ denotes the positive part of $\rho$.

In \cite{BBMN08} a suitable set $\mc S_{T} \subset \mc
E_{T}$ of \emph{entropy-splittable} solutions to \eqref{e:1.1}
is also introduced, and the next result is proved.
\begin{theorem}\cite[Theorem 2.5]{BBMN08}.
For each $T>0$, the following statements hold.
 \label{t:ecne}
\begin{itemize}
\item[{\rm (i)}] {The sequence of functionals $\{H_{\varepsilon;T}\}$
    satisfies the $\Gamma$-liminf inequality
    $\Gamma$\textrm{-}$\varliminf_\varepsilon H_{\varepsilon;T} \ge
    H_T$ on $\mc X_T$.}

\item[{\rm (ii)}] {Assume that there is no interval where $f$ is
  affine. Then the sequence of functionals $\{H_{\varepsilon;T}\}$ is
  equicoercive on $\mc X_T$.}

\item[{\rm (iii)}] {Assume furthermore that $f \in C^2([-1,1])$, and
  $D,\sigma \in C^{\alpha}([-1,1])$ for some $\alpha>1/2$. Define
\begin{equation*}
  \upbar{H}_T(u) := 
            \inf \big\{\varliminf_n H_T(u_n),\,
               \{u_n\} \subset \mc S_T\,:\:
                 u_n \to u \text{ in $\mc X_T$} \big\}
\end{equation*} 
Then the sequence of functionals $\{H_{\varepsilon;T}\}$ satisfies the
$\Gamma$-limsup inequality $\Gamma$\textrm{-}$\varlimsup_\varepsilon
H_{\varepsilon;T} \le \upbar{H}_T$ on $\mc X_T$.  }

\end{itemize}
\end{theorem}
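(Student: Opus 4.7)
The plan is to address the three parts in turn, each resting on a different classical tool for viscous conservation laws.

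\textbf{Part (i), $\Gamma$-liminf.} Since $H_{\varepsilon;T}$ is a supremum of an affine-in-$u$ functional minus a quadratic-in-$\varphi$ term, it has the form of a Legendre transform, and any explicit choice of $\varphi$ yields a lower bound. The key move is to test with $\varphi = \lambda\, \eta'(u^\varepsilon)\,\psi$ for a smooth entropy $\eta$, a cutoff $\psi \geq 0$ in $C^\infty_{\mathrm{c}}((0,T)\times \bb T)$, and a scalar $\lambda$ to be optimized. After integration by parts and chain rule (justified by $u^\varepsilon_x \in L^2$ whenever $H_{\varepsilon;T}(u^\varepsilon)<+\infty$), one gets an inequality controlling the entropy production $\wp_{\eta,u^\varepsilon}(\psi)$ by $\sqrt{\varepsilon H_{\varepsilon;T}(u^\varepsilon)}$ times a $\sigma$-weighted Dirichlet-type term in $\eta'(u^\varepsilon)\psi$, minus a viscous dissipation and plus an $O(\varepsilon)$ remainder. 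Passing to the limit along $u^\varepsilon \to u$, first letting $\psi$ vary for each $\eta$ yields $u \in \mc E_T$, and then substituting general entropy samplers $\vartheta$ as in \eqref{e:2.5}--\eqref{e:2.6} and inverting the Legendre relation pointwise in the $v$ variable produces exactly the weight $D(v)/\sigma(v)$ against $\varrho_u^+(v;dt,dx)$, i.e.\ $H_T(u)$.

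\textbf{Part (ii), equicoercivity.} Equiboundedness in $L^\infty$ is built into $U$. For strong $L^1$-relative-compactness of sublevels, I would invoke compensated compactness. From the variational representation one deduces that for every smooth entropy $\eta$, the distribution $\wp_{\eta,u^\varepsilon}$ decomposes as a term uniformly bounded in total variation (coming from the ``deterministic'' viscous part) plus a term compact in $H^{-1}_{\mathrm{loc}}$ (coming, via Cauchy--Schwarz against the quadratic part of the definition of $H_{\varepsilon;T}$, from the ``noise'' part). Murat's lemma upgrades this to $H^{-1}_{\mathrm{loc}}$-compactness of $\wp_{\eta,u^\varepsilon}$ itself. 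The assumption that $f$ is affine on no nontrivial interval then allows Panov's compensated-compactness theorem for genuinely nonlinear fluxes to be applied, forcing the Young measure generated by $\{u^\varepsilon\}$ to reduce to a Dirac mass, and hence $u^\varepsilon \to u$ strongly in $L^1([0,T]\times \bb T)$.

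\textbf{Part (iii), $\Gamma$-limsup.} By the very definition of $\upbar H_T$ and a diagonal argument, it suffices to construct a recovery sequence for each $u$ in the entropy-splittable class $\mc S_T$. Decompose such a $u$ into smooth pieces separated by shocks. On the smooth pieces $u$ solves \eqref{e:1.1} classically, so choosing $u^\varepsilon = u$ there costs only $O(\varepsilon)$ in $H_{\varepsilon;T}$, due solely to the viscous term in \eqref{e:2.3}. Across each shock one inserts a layer of width $\varepsilon$ obtained by rescaling a travelling-wave profile for \eqref{e:1.2}, selected so that its cost, computed via the variational formula, converges as $\varepsilon \downarrow 0$ to the jump integrand $|n^x|\int dv\,(D(v)/\sigma(v))\,\rho^+(v,u^+,u^-)/|u^+-u^-|$ of \eqref{e:2.8}. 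The $C^2$ regularity of $f$ and the $C^\alpha$ regularity ($\alpha>1/2$) of $D,\sigma$ enter to make the asymptotic analysis of this ODE profile quantitative.

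\textbf{Main obstacle.} The most delicate step is the shock-layer matching in (iii): one must identify the leading-order cost of a viscous travelling wave with the exact Einstein-like quotient $D/\sigma$ weighting $\rho^+$, and glue the layers to the smooth pieces with errors that vanish in the limit. Globalizing this local construction is precisely what restricts the class to $\mc S_T$ and forces passage to the relaxation $\upbar H_T$. A secondary difficulty, in (i), is that in the entropy-production identity one is discarding an $O(\varepsilon)$ dissipation term multiplying a quantity not known to be uniformly bounded; this is benign when $\eta$ is convex, but to extract the positive-part weight $\varrho_u^+$ (and not merely $|\varrho_u|$) one must perform the Legendre/sampler inversion in the $v$ variable after, not before, the limit.
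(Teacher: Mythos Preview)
This theorem is not proved in the present paper at all: it is quoted verbatim from \cite[Theorem~2.5]{BBMN08}, and the paper provides no argument for it. There is therefore no ``paper's own proof'' to compare your proposal against.

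That said, your outline is a reasonable sketch of the standard machinery one would expect in \cite{BBMN08}: entropy/entropy-sampler testing for the liminf, compensated compactness (Murat plus a genuine-nonlinearity reduction of the Young measure) for equicoercivity, and viscous travelling-wave layers glued along shocks for the limsup on the restricted class $\mc S_T$. Whether the details match the actual construction in \cite{BBMN08}---in particular how the class $\mc S_T$ of entropy-splittable solutions is defined and exploited in the recovery-sequence argument, and how the Legendre inversion in the $v$ variable is carried out to produce exactly $\varrho_u^+$ with weight $D/\sigma$---cannot be checked from the present paper and would require consulting the cited reference directly.
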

Note that $\Gamma$-limsup inequality is not complete, as it is not
known that $\upbar{H}_T=H_T$.

\subsection{The Jensen and Varadhan functional}
Suppose that $\sigma$ is such that there exists $h\in
C^2([-1,1])$ such that $\sigma h'' = D$, and let $g$ be such that
$g'=h'f'$. For $T>0$, we also introduce the Jensen and Varadhan
functional $H^{JV}_{T}:\mc X_{T} \to [0,+\infty]$ as
\begin{equation}
H^{JV}_{T}(u):=
 \begin{cases}
\displaystyle 
\! \sup_{\varphi \in C^\infty([0,T]\times \bb T; [0,1])} \!\!
\Big\{
\int_{\bb T}\!\!dx\,
   [h(u(T,x))\varphi(T,x)-h(u(0,x))\varphi(0,x)] &
   \\
\displaystyle 
\phantom{ \displaystyle \! \sup_{\varphi
   \in C^\infty([0,T]\times \bb T; [0,1])} \!\!
\Big\{
} 
- \int_{0}^{T}\!dt\,\langle h(u) , \varphi_t
   \rangle + \langle g(u) , \varphi_x \rangle \Big\} 
\\ \qquad \qquad \qquad \qquad\qquad
\text{if $u$ is
     a weak solution to \eqref{e:1.1}}
   \\
   + \infty \qquad \qquad \qquad \qquad \text{otherwise}
  \end{cases}
\end{equation}
Note that the definition of $H^{JV}_T$ does not depend on the choice
of $h$, provided it satisfies $\sigma h'' = D$. This functional has
been introduced in \cite{J00} (in the case $D\equiv 1$ and
$f(u)=\sigma(u)=u(1-u)$). In \cite{BBMN08} it is proved that $H^{JV}_T
\le H_T$, that $H^{JV}_T(u) = H_T(u)$ if $f$ is convex or concave and
$u$ has bounded variation, and that $H^{JV}_T < H_T$ if $f$ is neither
convex or concave.

\section{Quasi-potentials}
\label{s:3}
We want to study the \emph{quasi-potentials}
$V_{\eps},\,V,\,V^{JV}:[-1,1]\times U \to [0,+\infty]$ associated
respectively with $H_{\eps;T}$, $H_{T}$ and $H^{JV}_{T}$, and defined
as
\begin{equation}
\label{e:3.1}
V_\eps(m,u_f) := \inf \big\{ H_{\eps;T}(u),\,T> 0,\,
             u\in \mc X_T\,:\:u(0)\equiv m,\,u(T)=u_f\big\}
\end{equation}
\begin{equation}
\label{e:3.2}
V(m,u_f) := \inf \big\{ H_T(u),\,T> 0,\,
          u\in \mc X_T\,:\:u(0)\equiv m,\,u(T)=u_f\big\}
\end{equation}
\begin{equation}
\label{e:3.2.1}
V^{JV}(m,u_f) := \inf \big\{ H^{JV}_T(u),\,T> 0,\,
          u\in \mc X_T\,:\:u(0)\equiv m,\,u(T)=u_f\big\}
\end{equation}
Note that, if $u_f\equiv m$, then
$V_\eps(m,u_f)=V(m,u_f)=V^{JV}(m,u_f)=0$. On the other hand, whenever
$m=+1$ or $m=-1$, then if $u_f \not \equiv m$, necessarily $\int\!dx\,
u_f(x) \neq m$ and thus
$V_\eps(m,u_f)=V(m,u_f)=V^{JV}(m,u_f)=+\infty$, since
$H_{\eps;T}(u)=H_{T}(u)=H^{JV}_{T}(u)=+\infty$ whenever $u\in \mc X_T$
is such that $\int_{\bb T}\!dx\, u(s,x) \neq \int_{\bb T}\!dx\, u(t,x) $ for some
$s,\,t\in [0,T]$. Therefore, in the following we focus on the case $m
\in (-1,1)$.

Our main result is the following. For $m \in (-1,1)$ define the
\emph{Einstein entropy} $h_m\in C\big([-1,1];[0,+\infty]\big)\cap
C^2((-1,1))$ as the unique function such that $\sigma(v) h_m^{\prime
  \prime}(v)=D(v)$ for $v \in (-1,1)$, $h_m(m)=0$,
$h_m^{\prime}(m)=0$, and let
\begin{equation*}
W_m(u_f):=\int_{\bb T}\!dx\,h_m(u_f(x)) \in [0,+\infty]
\end{equation*}
Note that, if $\int_{\bb T}\!dx\,u_f(x)=m$, $W_m(u_f)$ can also be
written by the more explicit but less evocative formula
\begin{equation*}
\int_{\bb T}\!dx 
\int_{m}^{u_f(x)}\!dw\,[u_f(x)-w]\frac{D(w)}{\sigma(w)}
\end{equation*}
\begin{theorem}
\label{t:quasipot}
\begin{itemize}
\item[(i)]{ Assume
 \begin{equation}
 \label{e:3.3}
 \lim_{\alpha \downarrow 0} \alpha^2 
    \big[\frac{1}{\sigma(-1+\alpha)}+\frac{1}{\sigma(1-\alpha)} \big]=0
\end{equation}
Then 
\begin{equation*}
V_\eps(m,u_f)=
\begin{cases}
W_m(u_f) & \text{if $\int_{\bb T}\!dx\,u_f=m$}
\\
+\infty & \text{otherwise}
\end{cases}
\end{equation*}
for any $\eps>0$, for any $m\in (-1,1)$ and $u_f \in U$.
}

\item[(ii)]{Assume $f \in C^2([-1,1])$ is such that there is no
    interval in which $f$ is affine. Assume also that for some
    $\delta_0>0$, the set $\{v \in [-1,1]\,:\:f^{\prime \prime}(v)=0\}
    \cap [m-\delta_0,m+\delta_0]$ is finite. Then
\begin{equation*}
V(m,u_f)=
\begin{cases}
W_m(u_f) & \text{if $\int_{\bb T}\!dx\,u_f=m$}
\\
+\infty & \text{otherwise}
\end{cases}
\end{equation*}
for any  $m\in (-1,1)$ and $u_f \in U$.
}

\item[(iii)]{Assume the same hypotheses of (ii) and furthermore that
    there exists $h \in C^2([-1,1])$ such that $\sigma h'' = D$. Then
\begin{equation*}
V^{JV}(m,u_f)=
\begin{cases}
W_m(u_f) & \text{if $\int_{\bb T}\!dx\,u_f=m$}
\\
+\infty & \text{otherwise}
\end{cases}
\end{equation*}
for any  $m\in (-1,1)$ and $u_f \in U$.
}
\end{itemize}
\end{theorem}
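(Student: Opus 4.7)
I will prove matching upper and lower bounds on each of $V_\eps$, $V$, $V^{JV}$, exploiting the hierarchy $H^{JV}_T \le H_T$ (established in \cite{BBMN08}) and the $\Gamma$-convergence estimates of Theorem~\ref{t:ecne}. For any admissible path (one with $H_{\eps;T}(u)<\infty$, $H_T(u)<\infty$ or $H^{JV}_T(u)<\infty$), the mean $\int u(t,\cdot)\,dx$ is conserved, so the case $\int u_f\,dx\ne m$ forces the infimum to be $+\infty$, as claimed. From here on I focus on $\int u_f\,dx = m$.

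\textbf{Lower bounds.} For part~(i), given admissible $u$ with $u(0)\equiv m$, $u(T)=u_f$, the Legendre dual of \eqref{e:2.3} provides a flux $J$ with $u_t+f(u)_x-(\eps/2)(D(u)u_x)_x=-J_x$ and $H_{\eps;T}(u)=(2\eps)^{-1}\int_0^T\!\int_{\bb T}J^2/\sigma(u)\,dx\,dt$. Computing $\tfrac{d}{dt}\!\int h_m(u)\,dx$: the advective piece vanishes on $\bb T$ (since $g'=h_m'f'$), the diffusive piece equals $-(\eps/2)\int(D^2/\sigma)u_x^2$, and the $J$-piece equals $\int(D/\sigma)u_x J$. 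Integrating in $t$ and applying Young's inequality $ab\le(\eps/2)a^2+(1/(2\eps))b^2$ to the cross term gives $W_m(u_f)\le H_{\eps;T}(u)$. For part~(ii), apply the entropy-measure identity \eqref{e:2.6} with test entropy $h_m$ (regularized to $C^2([-1,1])$ and passing to the limit by monotone convergence if $h_m$ blows up at $\pm 1$): testing against a bump approximating $\id_{(0,T)}$ in $t$ and using $u\in C([0,T];L^1)$, the total signed entropy production equals $\int h_m(u_f)\,dx-\int h_m(m)\,dx = W_m(u_f)$, while its positive part is dominated by $\int(D/\sigma)\varrho_u^+\,dv\,dt\,dx=H_T(u)$. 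For part~(iii), take $\varphi\equiv 1$ in the supremum defining $H^{JV}_T$: any admissible $h$ with $\sigma h''=D$ differs from $h_m$ by an affine function, and since $\int u_f\,dx=m$ and $h_m(m)=0$, this test evaluates precisely to $W_m(u_f)$.

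\textbf{Upper bounds.} For part~(i), consider the ``reversed-advection'' viscous PDE $u^*_s=f(u^*)_x+(\eps/2)(D(u^*)u^*_x)_x$ with $u^*(0)=u_f$. The advective term leaves $W_m$ invariant on $\bb T$, while the diffusion gives $\tfrac{d}{ds}W_m(u^*)=-(\eps/2)\int(D^2/\sigma)(u^*_x)^2\le 0$, so $W_m$ is a Lyapunov functional and $u^*(s)\to m$ as $s\to\infty$ on the mass-$m$ slice. Setting $u(t):=u^*(T-t)$, $u$ satisfies the controlled equation with $J=\eps D(u)u_x$ and
\[
H_{\eps;T}(u)=\tfrac{\eps}{2}\int_0^T\!\int_{\bb T}(D^2/\sigma)u_x^2\,dx\,dt=W_m(u_f)-W_m(u(0)),
\]
which tends to $W_m(u_f)$ as $T\to\infty$. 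To enforce $u(0)\equiv m$ exactly, prepend a short linearly-interpolating segment from $m$ to $u(0)$, whose cost is $O(\|u(0)-m\|_{L^2}^2/(\eps\tau_0))\to 0$; hypothesis \eqref{e:3.3} guarantees $W_m(u_f)<+\infty$ and bounded behaviour of the viscous flow when $u_f$ reaches $\pm 1$. For part~(ii), since $V^{JV}\le V$ already, it suffices to show $V(m,u_f)\le W_m(u_f)$. I construct directly an entropy-splittable path $u\in\mc S_T$ built from a fan of anti-entropic shocks emanating from $\{t=0\}$ that carries the profile of a piecewise-constant approximation of $u_f$; the cost of each shock is computed via \eqref{e:2.8}--\eqref{e:2.8.5}, and on refining the partition and passing by density to general $u_f\in L^\infty(\bb T)$, the total cost converges to $W_m(u_f)$. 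The hypothesis that $\{f''=0\}$ is finite near $m$ is precisely what ensures a consistent anti-entropic orientation of these shocks. For part~(iii), $V^{JV}(m,u_f)\le V(m,u_f)\le W_m(u_f)$ follows immediately.

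\textbf{Principal obstacle.} The hardest step is the direct construction of the entropy-splittable path for the upper bound of $V$ in part~(ii): one must design a family of (non-interacting, or controllably interacting) anti-entropic shocks whose cumulative cost, summed via \eqref{e:2.8}, reproduces the volume integral $\int h_m(u_f)\,dx$, and then pass to the limit from piecewise constant to general $L^\infty$ profiles. A tempting alternative is to take an $\eps\to 0$ limit of the viscous paths of~(i) and invoke the $\Gamma$-liminf inequality of Theorem~\ref{t:ecne}; but the reversed-advection flow relaxes on the $1/\eps$ time scale, so the paths have $\eps$-dependent time horizons and a diagonal/rescaling argument becomes necessary. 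Ancillary technical issues are the exact enforcement of $u(0)\equiv m$ (handled by short corrections with vanishing cost) and the boundary behaviour of $h_m$ near $\pm 1$ (handled by \eqref{e:3.3} in~(i) and by $C^2$-approximation in~(ii)).
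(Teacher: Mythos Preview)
Your arguments for part~(i) and part~(iii), and the lower bound in part~(ii), are essentially those of the paper (the Freidlin--Wentzell splitting in~(i), the test $\varphi\equiv 1$ in~(iii), and in~(ii) the observation that the signed $h_m$-entropy production equals $W_m(u_f)$ while its positive part equals $H_T(u)$). The substantive gap is the upper bound for $V$ in part~(ii). Your proposed ``fan of anti-entropic shocks emanating from $\{t=0\}$'' cannot work as stated: a weak solution $u\in\mc E_T$ with $u(0)\equiv m$, $u(T)=u_f$ whose entropy production is purely nonnegative is, under the time--space reversal $P^T$, a Kruzhkov solution running from $Pu_f$ to the constant $m$ in time $T$; but on the torus the Kruzhkov flow only reaches $m$ asymptotically (Theorem~\ref{t:infconv}), never in finite time. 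Hence every finite-time path from $m$ to $u_f$ necessarily carries some entropic (or mixed) production, and its cost strictly exceeds $W_m(u_f)$. Nothing in ``refining the partition'' of $u_f$ forces this surplus to vanish, so the claimed convergence of the shock-cost sum to $W_m(u_f)$ is unsupported.

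The paper's route avoids this obstruction by upgrading your lower-bound computation to an \emph{equality}: for any $u\in\mc E_T$ with $u(0)\equiv m$, $u(T)=u_f$ one has $H_T(u)=H_T(P^Tu)+W_m(u_f)$ (Lemma~\ref{l:ubv}), because $\varrho_{P^Tu}=-\varrho_u$ so the positive parts swap. Optimizing, $V(m,u_f)=V(Pu_f,m)+W_m(u_f)$, and the whole upper bound reduces to showing $V(u_i,m)=0$ for every $u_i$. This is achieved in two stages: first run the \emph{free} Kruzhkov evolution from $u_i$ until the profile is uniformly within $\delta$ of $m$ (zero cost, Theorem~\ref{t:infconv}); then connect a near-$m$ piecewise constant profile to the exact constant $m$ in finite time with cost $\le\gamma$. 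This last connection (Lemma~\ref{l:connect}) is the genuine technical core: one splits each small jump into an entropic piece and $M$ tiny anti-entropic pieces (Lemma~\ref{l:connect2}), overlays an explicit three-shock ``absorbing'' pattern that sweeps the torus and terminates at $m$ in finite time (Lemma~\ref{l:connect1}), and checks via \eqref{e:piececost} that the total cost is $O(M^{-1})$. The hypothesis that $\{f''=0\}$ is finite near $m$ is used here to fix a definite convexity type on $[m-\delta_0,m+\delta_0]$ so that these shocks have the required signs and speeds. In short, the missing idea is the reversal identity, which lets you spend the hard construction on the ``cheap'' direction $u_i\to m$ rather than on $m\to u_f$.
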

Note that \eqref{e:3.3} is verified if $\sigma$ does not vanish, or
vanishes slower than quadratically in $-1$ and $+1$.


Observe that $H_{\eps;T}$ has a quadratic structure (see
\eqref{e:riesz}), so that the proof of Theorem~\ref{t:quasipot}-(i) is
an infinite-dimensional version of Freidlin-Wentzell theorem
\cite[Theorem~4.3.1]{FW84}. However this is not the case for $H_T$. In
particular, since $H_T(u)=+\infty$ if $u$ is not a (entropy-measure)
solution to \eqref{e:1.1}, the main technical difficulty in the proof
of Theorem~\ref{t:quasipot}-(ii) is to show that one can find a
solution $u$ to \eqref{e:1.1} such that $u$ connects \emph{in finite
  time} a profile $v \in U$ close in $L_\infty(\bb T)$ to the constant
profile $m$, to $m$ itself. We remark that the quasi-potential problem
for $H_T$ is at this time being addressed in \cite{B08} in the case of
Dirichlet boundary conditions. While this setting is quite similar to
ours, the difficulties are completely different. In the boundary
driven case, the entropic evolution connects a non-constant profile to
a constant in finite time, so for $T$ large it is not difficult to
solve the minimization problem \eqref{e:3.2} far from the
boundaries. On the other hand, new challenging difficulties appear in
\eqref{e:3.2} when dealing with weak solutions to \eqref{e:1.1}
featuring discontinuities at the boundary (boundary layers). Of course,
this problem does not appear at all on a torus.

\begin{remark}
\label{r:add}
Let $T_1,\,T_2>0$, and let $u_1 \in \mc X_{T_1}$, $u_2 \in \mc X_{T_2}$.
Define the measurable function $u:[0,T_1+T_2]\times \bb T \to [-1,1]$ by
$u(t,x)=u_1(t,x)$ if $t \in [0,T_1]$, and $u(t,x)=u_2(t-T_1,x)$ if $t \in
(T_1,T_1+T_2]$. Then $u \in \mc X_{T_1+T_2}$ iff $u_1(T_1)=u_2(0)$ and
in such a case
\begin{equation*}
H_{\eps;T_1+T_2}(u)=H_{\eps;T_1}(u_1)+H_{\eps;T_2}(u_2)
\end{equation*}

Furthermore if the hypotheses of Theorem~\ref{t:quasipot}-(ii) hold,
then
\begin{equation*}
H_{T_1+T_2}(u)=H_{T_1}(u_1)+H_{T_2}(u_2)
\end{equation*}
\end{remark}
\begin{proof}
  A change of variables in the definition \eqref{e:2.3} shows that
  $H_{\eps;T_1}(u_1)+H_{\eps;T_2}(u_2)$ can be still written in the
  form \eqref{e:2.3} with $T=T_1+T_2$, where however the supremum is
  carried over all the test functions $\varphi \in
  C^\infty_{\mathrm{c}}\big((0,T_1)\cup(T_1,T_1+T_2)\times \bb T
  \big)$.  However, if $u_1(T_1)=u_2(0)$, the supremum carried over
  such test functions coincides with the supremum carried over
  $C^\infty_{\mathrm{c}}\big((0,T_1+T_2)\times \bb T \big)$. Namely,
  $H_{\eps;T_1}(u_1)+H_{\eps;T_2}(u_2)$ equals the definition of
  $H_{\eps;T_1+T_2}(u)$.

  By \eqref{e:2.7} it follows that $H_{T_1+T_2}(u)=+\infty$ whenever
  $H_{T_1}(u_1)=+\infty$ or $H_{T_2}(u_2)=+\infty$. Assume instead
  $H_{T_1}(u_1),\,H_{T_2}(u_2)<\infty$. Under the assumptions of
  Theorem~\ref{t:quasipot}-(ii), the boundedness of $H_T$ implies strong
  continuity in $L_1(\bb T)$ as remarked below
  Proposition~\ref{p:kin}. Therefore if $u_1(T_1)=u_2(0)$ then $u\in
  C\big([0,T_1+T_2];L_1(\bb T)\big)$ and $u \in \mc E_{T_1+T_2}$. By
  \eqref{e:2.5}, \eqref{e:2.6} and the $L_1(\bb T)$ continuity of
  $u_1$, $u_2$ and $u$, it follows that $\varrho_u(v;\{T_1\} \times
  \bb T)=\varrho_{u_1}(v;\{T_1\} \times \bb T)=\varrho_{u_2}(v;\{0\}
  \times \bb T)=0$ for a.e.\ $v\in [-1,1]$. Thus $\varrho_u(v;dt,dx)=
  \varrho_{u_1}(v;dt,dx)$ in $[0,T_1]\times \bb T$ and
  $\varrho_u(v;dt,dx)= \varrho_{u_2}(v;d(t-T_1),dx)$ in
  $[T_1,T_1+T_2]\times \bb T$, and the equality follows from
  \eqref{e:2.7}.
\end{proof}
Since $H_{\eps;T}(m)=H_T(m)=0$, by Remark~\ref{r:add}, the infima in
\eqref{e:3.1}, \eqref{e:3.2} are attained in the limit $T \to
+\infty$.

\section{Proof of Theorem~\ref{t:quasipot} for $V_\eps$}
\label{s:4}
Given a bounded measurable function $a \ge 0$ on $[0,T]\times \bb
T$ let $\mc D^1_{a;T}$ be the Hilbert space obtained by
identifying and completing the functions $\varphi\in
C^\infty([0,T]\times \bb T)$ with respect to the seminorm
$\Big[\int_{0}^{T}\!dt\, \langle \varphi_x, a\, \varphi_x \rangle
\Big]^{1/2}$. Let $\mc D^{-1}_{a;T}$ be its dual space. The
corresponding norms are denoted respectively by $\|\cdot \|_{\mc
  D^1_{a;T}}$ and $\|\cdot \|_{\mc D^{-1}_{a;T}}$.

\begin{remark}
\label{r:dscal}
Let $a \ge 0$ be a bounded measurable function on $[0,T]\times \bb
T$. Let $F,\,G \in L_2\big([0,T]\times \bb T \big)$ be such that
$F_x,\,(a\,G)_x \in \mc D^{-1}_{a;T}$. Assume that $\int_{\bb T}\!dx\,
G(t,x)=0$ for a.e.\ $t \in [0,T]$. Then
 \begin{equation*}
  \Big(F_x, (a\,G)_x \Big)_{\mc D^{-1}_{a;T}} 
     =\int_{0}^{T}\!dt\, \langle F, G\rangle
 \end{equation*}
 where $\big(\cdot, \cdot \big)_{\mc D^{-1}_{a;T}}$ denotes the
 scalar product in $\mc D^{-1}_{a;T}$.
\end{remark}
By a standard application of the Riesz representation theorem (see
\cite[Lemma 3.1]{BBMN08}), we have that if
$H_{\eps;T}(u)<+\infty$ then
\begin{equation}
\label{e:riesz}
H_{\eps;T}(u) =  \frac{\eps^{-1}}{2} \Big\| u_t + f(u)_x 
         -\frac{\eps}2 \big( D(u) u_x\big)_x
          \Big\|_{\mc D^{-1}_{\sigma(u);T}}^2
\end{equation}
If $\int_{\bb T}\!dx \,u_f(x)\neq m$, then
Theorem~\ref{t:quasipot}-(i) follows from the conservation of the
total mass of functions $u \in \mc X_T$ with $H_{\eps;T}(u)<+\infty$.
On the other hand, if $\int_{\bb T}\!dx \, u_f(x)= m$, the proof of
the theorem is a consequence of the following Lemmata. In fact from
Lemma~\ref{l:ubve} we get $V_\eps(m,u_f)\ge W_m(u_f)$, and from
Lemma~\ref{l:ubve} and Lemma~\ref{l:lbve} we have $V_\eps(m,u_f)\le
W_m(u_f) + \gamma$ for each $\gamma>0$.
\begin{lemma}
\label{l:ubve}
Assume \eqref{e:3.3}, let $T>0$ and $u\in \mc X_T$ be such that
$H_{\eps;T}(u)<+\infty$, $u(0,x)\equiv m$, $u(T,x)=u_f(x)$. Then
$\int_{\bb T}\!dx\, h_m(u_f(x))<+\infty$, $u_t + f(u)_x ,\, \big( D(u)
u_x\big)_x \in \mc D^{-1}_{\sigma(u);T}$ and
\begin{equation*}
H_{\eps;T}(u) = \int_{\bb T}\!dx\, h_m(u_f(x))
+\frac{\eps^{-1}}{2} \Big\| u_t + f(u)_x 
         +\frac{\eps}2 \big( D(u) u_x\big)_x
          \Big\|_{\mc D^{-1}_{\sigma(u);T}}^2
\end{equation*}
\end{lemma}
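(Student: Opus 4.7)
The strategy is polarization of the Hilbertian Riesz representation \eqref{e:riesz}. Introduce
\begin{equation*}
C \,:=\, u_t + f(u)_x - \tfrac{\eps}{2}\,\big(D(u)\,u_x\big)_x,
\qquad
A \,:=\, u_t + f(u)_x + \tfrac{\eps}{2}\,\big(D(u)\,u_x\big)_x,
\end{equation*}
so that $\|C\|^2_{\mc D^{-1}_{\sigma(u);T}} = 2\eps\, H_{\eps;T}(u)$ by \eqref{e:riesz}, while $A$ encodes the time-reversed anti-dissipative operator. Using $C+A = 2(u_t+f(u)_x)$ and $C-A = -\eps\,(D(u)u_x)_x$, and assuming $(D(u)u_x)_x \in \mc D^{-1}_{\sigma(u);T}$, the identity $\|C\|^2 - \|A\|^2 = \big(C+A,\,C-A\big)_{\mc D^{-1}_{\sigma(u);T}}$ gives
\begin{equation*}
H_{\eps;T}(u) \,=\, \tfrac{\eps^{-1}}{2}\, \|A\|^2_{\mc D^{-1}_{\sigma(u);T}}
\,-\, \big(u_t+f(u)_x,\,(D(u)\,u_x)_x\big)_{\mc D^{-1}_{\sigma(u);T}}.
\end{equation*}
The lemma is thus reduced to the identification $-\big(u_t+f(u)_x,\,(D(u)u_x)_x\big)_{\mc D^{-1}_{\sigma(u);T}} = \int_{\bb T} h_m(u_f(x))\,dx$.

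The key observation is that, by $\sigma\,h_m''=D$, the Riesz potential of $(D(u)u_x)_x$ in $\mc D^{-1}_{\sigma(u);T}$ is $\psi := -h_m'(u)$, since $-(\sigma(u)\,\psi_x)_x = (D(u)u_x)_x$. Consequently, $(Z,(D(u)u_x)_x)_{\mc D^{-1}_{\sigma(u);T}} = \langle Z, -h_m'(u)\rangle$ in the relevant duality sense for $Z \in \mc D^{-1}_{\sigma(u);T}$. Applied to $Z = f(u)_x$, Remark~\ref{r:dscal} with $F=f(u)$ and $G=(h_m'(u))_x$ (whose spatial average on $\bb T$ vanishes), followed by an integration by parts, gives $0$: the primitive of $f'(v)\,h_m'(v)$ has zero circulation on $\bb T$. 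Applied to $Z = u_t$, mass conservation, itself a consequence of $H_{\eps;T}(u)<+\infty$, allows the $x$-independent part of $\psi$ to be dropped, while the chain rule in time yields $\langle u_t, -h_m'(u)\rangle = -\int_0^T\!dt\int_{\bb T}\!dx\,\partial_t h_m(u) = -\int_{\bb T}[h_m(u_f)-h_m(m)]\,dx = -\int_{\bb T} h_m(u_f)\,dx$, using $u(0)\equiv m$, $u(T)=u_f$, and $h_m(m)=0$.

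The main technical obstacle is the rigorous justification of these formal identities. The inclusion $(D(u)u_x)_x \in \mc D^{-1}_{\sigma(u);T}$ is equivalent to the quantitative bound $\int_0^T\!dt\int_{\bb T}\!dx\,D(u)^2 u_x^2/\sigma(u) < +\infty$; this does not follow from $u_x \in L_2$ alone, since $1/\sigma(u)$ may blow up where $u \approx \pm 1$, and is precisely what assumption~\eqref{e:3.3} is tailored to control. Moreover, $u_t$ is a priori only a distribution and $h_m'$ may be singular at $\pm 1$, so the chain-rule computation of $\langle u_t, -h_m'(u)\rangle$ must be handled with care. I would address both points by approximation: truncate $u$ to $u^\delta$ valued in $[-1+\delta, 1-\delta]$ and/or mollify in $(t,x)$, verify the identity for the regularized object (for which every step above is classical), and then pass to the limit $\delta \downarrow 0$. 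Non-negativity of $h_m$ and monotone convergence give $\int h_m(u_f^\delta) \uparrow \int h_m(u_f)$, producing en passant the integrability bound $\int_{\bb T} h_m(u_f)\,dx \le H_{\eps;T}(u) < +\infty$, while lower semicontinuity of the quadratic functionals together with \eqref{e:3.3} ensures that the residual term $\tfrac{\eps^{-1}}{2}\|A\|^2_{\mc D^{-1}_{\sigma(u);T}}$ passes to the limit and yields the full identity.
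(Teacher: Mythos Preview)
Your proposal is correct and follows the same strategy as the paper: polarize the Riesz identity \eqref{e:riesz}, identify the cross term via the Riesz representative $-h_m'(u)$ of $(D(u)u_x)_x$ (which the paper computes through Remark~\ref{r:dscal}), and then pass from $u$ bounded away from $\pm 1$ to the general case by approximation. The only difference worth noting is the approximation scheme: the paper uses the affine contraction $u^\delta := (1-\delta)u + \delta m$, which preserves $u^\delta(0)\equiv m$ and yields the linear relations $u^\delta_t = (1-\delta)u_t$, $u^\delta_x = (1-\delta)u_x$, making the convergence of $H_{\eps;T}(u^\delta)$ and of the anti-dissipative norm under \eqref{e:3.3} more transparent than your truncation/mollification.
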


\begin{lemma}
\label{l:lbve}
For each $\gamma>0$, there exists $T>0$ and $u\in \mc X_T$ such that
$H_{\eps;T}(u)<+\infty$, $u(0,x)=m$, $u(T,x)=u_f(x)$ and
\begin{equation*}
\frac{\eps^{-1}}{2} \Big\| u_t + f(u)_x 
         +\frac{\eps}2 \big( D(u) u_x\big)_x
          \Big\|_{\mc D^{-1}_{\sigma(u);T}}^2 \le \gamma
\end{equation*}
\end{lemma}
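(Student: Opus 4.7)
The plan hinges on the observation that $u_t + f(u)_x + \tfrac{\eps}{2}(D(u)u_x)_x$ is the residual of the time-reversal of \eqref{e:1.2}: setting $v(s,x):=u(T-s,x)$, the residual vanishes precisely when $v$ solves the well-posed, uniformly parabolic equation
\begin{equation*}
v_s - f(v)_x \,=\, \tfrac{\eps}{2}\bigl(D(v)v_x\bigr)_x,
\end{equation*}
i.e.\ \eqref{e:1.2} with $f$ replaced by $-f$. This evolution preserves spatial mean, satisfies the maximum principle on $[-1,1]$, and admits $W_m$ as a Lyapunov functional: testing against $h_m'(v)$ and integrating by parts on the torus (the drift term $\int_{\bb T} h_m'(v) f(v)_x\,dx$ vanishes as the integral of a spatial derivative) gives
\begin{equation*}
\tfrac{d}{ds}\int_{\bb T}\!h_m(v)\,dx
\,=\, -\tfrac{\eps}{2}\int_{\bb T}\!\tfrac{D(v)^2}{\sigma(v)}\,v_x^2\,dx.
\end{equation*}
Hence $\|v(s,\cdot)-m\|_{L_2}\to 0$ as $s\to\infty$, and parabolic smoothing upgrades this convergence to every $C^k(\bb T)$-norm.

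Based on this I would proceed as follows. By Lemma \ref{l:ubve} the case $W_m(u_f)=+\infty$ makes the statement vacuous, so I may assume $W_m(u_f)<+\infty$. Solve the Cauchy problem above with initial datum $u_f\in L_\infty(\bb T;[-1,1])$; classical quasilinear parabolic theory produces a unique weak solution $v\in C([0,\infty);L_2(\bb T))$, with values in $[-1,1]$ by the maximum principle and smooth for $s>0$. For large $S>0$ and small $\delta>0$ to be tuned, set $T:=S+\delta$ and define
\begin{equation*}
u(t,x)\,:=\,
\begin{cases}
\bigl(1-t/\delta\bigr)\,m + (t/\delta)\,v(S,x),& t\in[0,\delta],\\
v(T-t,x),& t\in[\delta,T].
\end{cases}
\end{equation*}
Then $u\in\mc X_T$, $u(0,\cdot)\equiv m$, $u(T,\cdot)=u_f$, and by Lemma \ref{l:ubve} together with the finiteness of $W_m(u_f)$ one has $H_{\eps;T}(u)<+\infty$.

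The residual vanishes identically on $[\delta,T]$ by construction, so only the interpolation slab $[0,\delta]$ contributes. On this slab $u$ stays in a $C^1$-neighbourhood of $m$ once $S$ is taken large enough, so in particular $\sigma(u)\ge\sigma(m)/2>0$ there. Writing each of the three summands in the residual as $\partial_x G$ for a suitable $G\in L_2$ (namely $\partial_x^{-1}u_t$, which exists since $u_t$ has spatial mean zero by mass conservation; $f(u)-f(m)$; and $\tfrac{\eps}{2}D(u)u_x$) and applying the Riesz duality bound
\begin{equation*}
\|\partial_x G\|^2_{\mc D^{-1}_{\sigma(u);T}}
\,\le\, \int_{0}^{T}\!\!dt\int_{\bb T} G^2/\sigma(u)\,dx
\end{equation*}
reduces each contribution to quantities of the form $\|v(S,\cdot)-m\|_{H^{-1}}^2/\delta$ or $\delta\,\|v(S,\cdot)-m\|_{H^1}^2$, times constants depending only on $f,D,\sigma(m)$. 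The main technical obstacle is to balance these two competing scalings: the $\delta^{-1}$ blow-up of the $u_t$ term must be defeated by the $H^{-1}$-smallness of $v(S,\cdot)-m$, so one first fixes $\delta$ small enough to handle the spatial terms and then invokes quantitative parabolic smoothing for the quasilinear equation with merely $L_\infty$ data to choose $S$ so large that $\|v(S,\cdot)-m\|_{H^1}\le \gamma\,\delta$. This balancing is standard but requires care, the drift $f(v)_x$ acting as a lower-order perturbation of the heat equation; once it is carried out, all three contributions lie below $\gamma/3$, and the claim follows.
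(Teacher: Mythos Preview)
Your approach is essentially the paper's: time-reverse the viscous flow started from $u_f$ so that the ``wrong-sign'' residual vanishes identically, then bridge to $m$ by linear interpolation on a short initial slab and bound that contribution via convergence of the flow to its mean; your device of solving \eqref{e:1.2} with $-f$ and then only time-reversing is equivalent, under $x\mapsto -x$, to the paper's choice of solving \eqref{e:1.2} with $f$ from the reflected datum $u_f(-x)$ and then time-and-space reversing. One minor slip: you invoke Lemma~\ref{l:ubve} to conclude $H_{\eps;T}(u)<+\infty$, but that lemma has finiteness as a \emph{hypothesis}, so the appeal is circular---finiteness should instead be read off directly from the construction ($v(S,\cdot)$ is smooth for $S>0$, so $u_x\in L_2$, and the slab estimate together with \eqref{e:riesz} does the rest).
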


\begin{proof}[Proof of Lemma~\ref{l:ubve}]
  We first assume that there exists $\delta>0$ such that for a.e.\
  $(t,x) \in [0,T] \times \bb T$, $-1+\delta \le u(t,x) \le 1-\delta$,
  so that $\sigma(u)$ is uniformly positive. It follows that
  $(D(u)u_x)_x,\,f(u)_x \in \mc D^{-1}_{\sigma(u);T}$ so that, since
  $H_{\eps;T}(u)<+\infty$, by \eqref{e:riesz} we also have $u_t \in
  \mc D^{-1}_{\sigma(u);T}$. In particular there exists $\theta \in
  L_2\big([0,T] \times \bb T \big)$ such that $u_t = \theta_x$
  weakly. Therefore
\begin{eqnarray*}
H_{\eps;T}(u) &  = & 
\frac{\eps^{-1}}{2} \Big\| u_t + f(u)_x 
         -\frac{\eps}2 \big( D(u) u_x\big)_x
          \Big\|_{\mc D^{-1}_{\sigma(u);T}}^2
\\
\nonumber
&  = &
\frac{\eps^{-1}}{2} \Big\| u_t + f(u)_x 
         +\frac{\eps}2 \big( D(u) u_x\big)_x
          \Big\|_{\mc D^{-1}_{\sigma(u);T}}^2
\\ \nonumber
& &
 - \Big( \theta_x+f(u)_x, 
         \big( D(u) u_x\big)_x
          \Big)_{\mc D^{-1}_{\sigma(u);T}}
\\
\nonumber
&  = &
\frac{\eps^{-1}}{2} \Big\| u_t + f(u)_x 
         +\frac{\eps}2 \big( D(u) u_x\big)_x
          \Big\|_{\mc D^{-1}_{\sigma(u);T}}^2
\\ \nonumber
& &
 -\int_0^T  \!dt\, \langle \theta , \frac{D(u)}{\sigma(u)} u_x \rangle
+ \langle f(u) , \frac{D(u)}{\sigma(u)} u_x \rangle
\end{eqnarray*}
where in the last line we used Remark~\ref{r:dscal}, as for each $t\in
[0,T]$
\begin{equation*}
\int_{\bb T}\!dx\, \frac{D(u(t,x))}{\sigma(u(t,x))}u_x(t,x) = 
\int_{\bb T}\!dx\, h_m^\prime(u(t,x))_x =0 
\end{equation*}
Similarly we have $\langle f(u(t)) , \frac{D(u(t))}{\sigma(u(t))}
u_x(t) \rangle=0$ and integrating by parts:
\begin{eqnarray*}
 -\int_0^T  \!dt\, \langle \theta , \frac{D(u)}{\sigma(u)} u_x \rangle
& = & \int_0^T  \!dt\, \langle \theta_x , h_m^\prime(u) \rangle
=\int_0^T  \!dt\, \langle u_t , h_m^\prime(u) \rangle
\\
&=& \int_{\bb T} \!dx\, h_m(u(T,x))-h_m(u(0,x))
\end{eqnarray*}
Lemma~\ref{l:ubve} is therefore established for each $u \in \mc X_T$
bounded away from $-1$ and $+1$. For a general $u \in \mc X_T$ such
that $u(0,\cdot)\equiv m\in (-1,1)$, and $\delta>0$, let us define
\begin{equation*}
u^\delta(t,x)=(1-\delta) u(t,x) + \delta m
\end{equation*}
Provided \eqref{e:3.3} holds, the sequence $\{u^\delta\} \subset \mc
X_T$ converges to $u$ as $\delta \to 0$, and is such that: for
$\delta>0$, $u^\delta$ is bounded away from $-1$ and $+1$;
$u^\delta(0,\cdot)\equiv m$, $\int_{\bb T}\!dx\,h(u^\delta(T,x))\to
\int_{\bb T}\!dx\,h(u(T,x))$; $H_{\eps;T}(u^\delta) \to H_{\eps;T}(u)$;
\begin{equation*}
\Big\| u^\delta_t + f(u^\delta)_x +\frac{\eps}2 \big( D(u^\delta) u^\delta_x\big)_x
          \Big\|_{\mc D^{-1}_{\sigma(u^\delta);T}}^2 \to 
\Big\| u_t + f(u)_x +\frac{\eps}2 \big( D(u) u_x\big)_x
          \Big\|_{\mc D^{-1}_{\sigma(u);T}}^2
\end{equation*}
Therefore, since Lemma~\ref{l:ubve} holds for $u^\delta$ for each
$\delta>0$, it also holds for $u$.
\end{proof}

The following result is well known \cite{E97}.
\begin{theorem}
\label{t:conve}
Let $u_f \in U$ and let $v:[0,\infty)\times \bb T \to \bb R$ be the
solution to \eqref{e:1.2} with initial datum $u_f$. Then $\lim_{t\to
  \infty} \|v(t)-m\|_{L_\infty ([0,T]\times \bb T)} =0$ where
$m=\int_{\bb T}\!dx\,u_f(x)$.
\end{theorem}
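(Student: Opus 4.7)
The plan is to derive Theorem~\ref{t:conve} from standard parabolic regularity combined with a LaSalle-type argument built on the quadratic entropy. First, since $D\ge D_0>0$ and $f$ is Lipschitz, classical theory for uniformly parabolic quasilinear equations gives a unique bounded solution $v$ to \eqref{e:1.2} with initial datum $u_f\in L_\infty(\bb T)$; the solution is smooth on $(0,\infty)\times\bb T$, takes values in $[-1,1]$ by the maximum principle, and conserves spatial mean, $\int_{\bb T} v(t,x)\,dx\equiv m$ for all $t\ge 0$.

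Next, I would derive the basic dissipation identity by multiplying \eqref{e:1.2} by $v-m$ and integrating by parts in $x$, using periodicity. The transport term vanishes because $\int_{\bb T} (v-m) f(v)_x\,dx = \int_{\bb T} \partial_x F(v)\,dx = 0$, where $F$ is any antiderivative of $w\mapsto (w-m)f'(w)$. Hence
\begin{equation*}
\frac{d}{dt}\int_{\bb T}\!\frac{(v-m)^2}{2}\,dx
 \,=\, -\frac{\eps}{2}\int_{\bb T}\! D(v)\,v_x^2\,dx \,\le\, 0.
\end{equation*}
Integrating from $0$ to $\infty$ and using $|v|\le 1$, I obtain the global bound $\int_0^\infty\!dt\int_{\bb T}\!dx\,D(v)\,v_x^2 \le 4\eps^{-1}$; since $D\ge D_0>0$, this forces $\|v_x(t_n,\cdot)\|_{L_2(\bb T)}\to 0$ along some sequence $t_n\to\infty$.

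Finally, to upgrade to uniform convergence I would invoke parabolic Schauder estimates (or De~Giorgi--Nash--Moser) to deduce that $\{v(t,\cdot)\}_{t\ge 1}$ is relatively compact in $C(\bb T)$, with a uniform modulus of continuity in $x$. Passing to a subsequence, $v(t_n,\cdot)\to v_*$ uniformly with $(v_*)_x=0$, so $v_*$ is constant; mass conservation then forces $v_*\equiv m$. Since by the identity above the map $t\mapsto \|v(t,\cdot)-m\|_{L_2(\bb T)}^2$ is monotone non-increasing and vanishes along the sequence $t_n$, it must tend to $0$ for the full family. Combined with the uniform equicontinuity in $x$ provided by the regularity step, this promotes $L_2$ decay to $L_\infty(\bb T)$ decay, proving the claim. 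The only nontrivial ingredient is the parabolic regularity, which is classical in the uniformly parabolic setting (cf.\ \cite{E97}); once it is in hand, the LaSalle-type argument is routine.
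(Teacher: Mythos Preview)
The paper does not actually prove Theorem~\ref{t:conve}; it is stated as a well-known result with a bare citation to \cite{E97}. So there is no proof in the paper to compare against.

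Your energy-dissipation/LaSalle argument is correct and is the standard route to this kind of statement: the quadratic Lyapunov functional gives monotonicity and integrability of $\int D(v)v_x^2$; parabolic regularity provides the uniform-in-time equicontinuity needed both to extract a $C(\bb T)$-convergent subsequence and to upgrade $L_2$ decay to $L_\infty$ decay; and the monotonicity of $t\mapsto\|v(t)-m\|_{L_2}^2$ turns subsequential convergence into full convergence. One minor point: writing the transport term as $\partial_x F(v)$ with $F'(w)=(w-m)f'(w)$ tacitly uses $f\in C^1$, whereas the paper only assumes $f$ Lipschitz. This is easily repaired by computing instead $\int_{\bb T}(v-m)f(v)_x\,dx = -\int_{\bb T} v_x\, f(v)\,dx = -\int_{\bb T}\partial_x G(v)\,dx=0$ with $G$ any primitive of $f$, which needs only that $f$ be Lipschitz and $v$ smooth for $t>0$.
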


\begin{proof}[Proof of Lemma~\ref{l:lbve}]
  Let $v:[0,\infty)\times \bb T \to \bb R$ be the solution to
  \eqref{e:1.2} with initial datum $v(0,x)=u_f(-x)$, and for
  $T_1,\,T_2>0$ let $u \in \mc X_{T_1+T_2}$ be defined as
\begin{equation*}
u(t,x)=
\begin{cases}
(1-\frac t{T_1})m + \frac t{T_1} v(T_2,-x)  & \text{for $t\in [0,T_1]$}
\\
v(T_1+T_2-t,-x) & \text{for $t\in [T_1,T_1+T_2]$}
\end{cases}
\end{equation*}
Since $u$ satisfies $u_t+f(u)_x+\frac{\eps}2 (D(u)u_x)_x=0$ for $t\in
[T_1,T_1+T_2]$, we have by Remark~\ref{r:add}
\begin{eqnarray}
\label{e:4.2}
&& \nonumber
\frac{\eps^{-1}}{2} \Big\| u_t + f(u)_x 
         +\frac{\eps}2 \big( D(u) u_x\big)_x
          \Big\|_{\mc D^{-1}_{\sigma(u);T_1+T_2}}^2 
\\
&& \nonumber
\qquad = \frac{\eps^{-1}}{2} \Big\| u_t + f(u)_x 
         +\frac{\eps}2 \big( D(u) u_x\big)_x
          \Big\|_{\mc D^{-1}_{\sigma(u);T_1}}^2 
\\
&& 
\nonumber
\qquad \le  \frac{3 \eps^{-1}}{2} \Big[\big\| u_t  
          \big\|_{\mc D^{-1}_{\sigma(u);T_1}}^2 
+ \big\|f(u)_x  \big\|_{\mc D^{-1}_{\sigma(u);T_1}}^2 
\\ & & \qquad
\phantom{\le  \frac{3 \eps^{-1}}{2} \Big[}
         +\big\|\frac{\eps}2 \big( D(u) u_x\big)_x
          \big\|_{\mc D^{-1}_{\sigma(u);T_1}}^2\Big] 
\end{eqnarray}
Let now $\delta>0$ (to be chosen below) be small enough to have
$-1<m-\delta<m+\delta<1$, and define
\begin{eqnarray*}
C_{\sigma,\delta} &:=& \max_{v \in [m-\delta,m+\delta]}
                      \frac{1}{\sigma(v)}<+\infty
\\
c_{f}(t) & := & \int_{\bb T}\!dx\, \sigma(u(t,x))\,
           \int_{\bb T}\!dx\, \frac{f(u(t,x))}{\sigma(u(t,x))}
\\
C_{f,\delta}& := & \max_{v\in [m-\delta,m+\delta]} 
             f(v) - \min_{v\in [m-\delta,m+\delta]} f(v)
\\
C_D& :=&  \max_{v\in [-1,1]} \frac{D(v)^2}{2}
\end{eqnarray*}
Let also $\theta \in L_2([0,T_1]\times \bb T)$ be defined by
\begin{eqnarray*}
&& \theta_x(t,x)=\frac{v(T_2,-x)-m}{T_1}
\\ &&
\int_{\bb T}\! \frac{\theta(t,x)}{\sigma(u(t,x))}=0
\end{eqnarray*}
By Theorem~\ref{t:conve}, there exists $\tau_\delta>0$ such that
$\|v(t)-m\|_{L_\infty(\bb T)} \le \delta$ for each $t\ge
\tau_\delta$. By Remark~\ref{r:dscal} and \eqref{e:4.2}, since
$u_t=\theta_x$ weakly, we have for each $T_2\ge \tau_\delta$
\begin{eqnarray}
\label{e:4.3}
&& \nonumber
\frac{\eps^{-1}}{2} \Big\| u_t + f(u)_x 
         +\frac{\eps}2 \big( D(u) u_x\big)_x
          \Big\|_{\mc D^{-1}_{\sigma(u);T_1+T_2}}^2 
\\
&& \nonumber
 \le  \frac{3 \eps^{-1}}{2} \int_0^{T_1}\!dt\,
\langle \theta, \frac{\theta}{\sigma(u)} \rangle +\langle f(u) - 
                         c_f, \frac{f(u)-c_f}{\sigma(u)} \rangle 
+\langle D(u)u_x,\frac{D(u)u_x}{\sigma(u)} \rangle
\\
&&
\le  \frac{3 \eps^{-1}}{2}   
 C_{\sigma,\delta} \big[\!
\int_0^{T_1}\!\!\!\!\!dt\,\langle \theta, \theta \rangle + T_1\,C_{f,\delta}^2
+T_1\,C_D  \langle v(T_2)_x, v(T_2)_x \rangle \big]
\end{eqnarray}
By standard parabolic estimates we have
\begin{equation*}
\int_0^{+\infty}\!dt\,\langle v_x,v_x \rangle<+\infty
\end{equation*}
In particular there exists a $T_{2,\delta}>\tau_\delta$ such that
$\langle v(T_{2,\delta})_x, v(T_{2,\delta})_x \rangle <\delta$. Note
that, as $\delta \to 0$, $C_{\sigma,\delta}$ stays bounded, while
$C_{f,\delta}$, $\langle \theta, \theta\rangle$ and $\langle
v(T_{2,\delta})_x, v(T_{2,\delta})_x \rangle$ vanish. Therefore the
right hand side of \eqref{e:4.3} can be made arbitrarily small
provided $\delta$ is small enough.
\end{proof}

\section{Proof of Theorem~\ref{t:quasipot} for $V$ and $V^{JV}$}
\label{s:5}
Define the parity operator $P:U \to U$ by $Pu(x)=u(-x)$ and for $T>0$
the time-space parity operator $P^T:\mc X_T \to \mc X_T$ by $P^T
u(t,x) = u(T-t,-x)$. Define the time reversed quasi-potential
$V:U\times [-1,1] \to [0,\infty]$ as
\begin{equation*}
V(u_i,m) := \inf \big\{ H_T(u),\,T> 0,\,
          u\in \mc X_T\,:\:u(0)= u_i,\,u(T)\equiv m \big\}
\end{equation*}

\begin{lemma}
  \label{l:ubv}
Assume $f \in C^2([-1,1])$ is such that there is no
  interval in which $f$ is affine. Let $T>0$, $u_f\in U$ and
  $m=\int_{\bb T}\!dx \, u_f(x)$. Then
\begin{equation*}
V(m,u_f) = V(Pu_f,m)+ W_m(u_f)
\end{equation*}
\end{lemma}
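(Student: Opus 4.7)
The plan is to exploit the time-space reversal invariance of $\mc E_T$ to reduce the problem to a path-by-path identity between kinetic defect measures. Given $u\in \mc X_T$ with $u(0)\equiv m$ and $u(T)=u_f$, set $\tilde u := P^T u$, so that $\tilde u(0)=Pu_f$ and $\tilde u(T)\equiv m$ (note $\int Pu_f = m$). The map $u\mapsto\tilde u$ is an involution on $\mc X_T$ that bijects the paths admissible for $V(m,u_f)$ with those admissible for $V(Pu_f,m)$. It therefore suffices to prove
\begin{equation}
\label{e:ubvplan1}
H_T(u)=H_T(\tilde u)+W_m(u_f)
\end{equation}
for every such $u$ and then take infima on both sides.

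The first step is to compute how the kinetic defect measure transforms under reversal. Writing $R(t,x):=(T-t,-x)$, for any entropy sampler $\vartheta$ the function $\tilde\vartheta(v,t,x):=\vartheta(v,T-t,-x)$ is again an entropy sampler with $\tilde Q(w,t,x)=Q(w,T-t,-x)$. Substituting in \eqref{e:2.5} and using $\partial_t\tilde\vartheta=-(\partial_t\vartheta)\circ R$ and $\partial_x\tilde Q=-(\partial_x Q)\circ R$ gives $P_{\tilde\vartheta,\tilde u}=-P_{\vartheta,u}$. By Proposition~\ref{p:kin} this shows $\tilde u\in\mc E_T\iff u\in\mc E_T$, and $\varrho_{\tilde u}=-R_*\varrho_u$, so $(\varrho_{\tilde u})^\pm=R_*\varrho_u^\mp$. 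Since $R$ is measure-preserving,
\begin{equation*}
H_T(\tilde u)=\int\!dv\,\frac{D(v)}{\sigma(v)}\,\varrho_u^-(v;dt,dx)
\end{equation*}
so \eqref{e:ubvplan1} reduces to identifying the signed integral $\int(D/\sigma)\,d\varrho_u$ with $W_m(u_f)$.

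The second step is to evaluate this integral by testing the kinetic formulation \eqref{e:2.6} against an entropy sampler built from the Einstein entropy. Choose a pointwise increasing family $\eta^\delta\in C([-1,1];[0,1])$ with $\eta^\delta\uparrow \id_{(-1,1)}$ as $\delta\downarrow 0$ and $\eta^\delta=0$ near $\pm 1$, and define $h^\delta\in C^2([-1,1])$ by $(h^\delta)''=\eta^\delta D/\sigma$ with $h^\delta(m)=0=(h^\delta)'(m)$, so that $h^\delta\uparrow h_m$ pointwise on $[-1,1]$. For $\chi_n\in C^\infty_{\mathrm{c}}((0,T))$ approximating $\id_{(0,T)}$, the function $\vartheta^\delta_n(v,t,x):=h^\delta(v)\chi_n(t)$ is an admissible entropy sampler. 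Under the assumption that $f$ has no affine interval, $\mc E_T\subset C([0,T];L^1(\bb T))$ (the remark after Proposition~\ref{p:kin}), so letting $n\to\infty$ in \eqref{e:2.6} yields
\begin{equation*}
\int\!(h^\delta)''(v)\,\varrho_u(v;dt,dx)=\int_{\bb T}\!dx\,h^\delta(u_f(x))
\end{equation*}
(the spatial-derivative term vanishes as $h^\delta(v)\chi_n(t)$ is $x$-independent, and $h^\delta(m)=0$). Sending $\delta\downarrow 0$, monotone convergence on $\varrho_u^\pm$ on the left and on $h^\delta\uparrow h_m$ on the right, combined with a short case analysis on whether $W_m(u_f)$ is finite, upgrades the display into \eqref{e:ubvplan1} as an identity in $[0,+\infty]$.

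I expect the principal technical obstacle to be precisely this $\delta\downarrow 0$ step when $D/\sigma$ is unbounded near $\pm 1$: one must construct $\{h^\delta\}$ so that $(h^\delta)''$ stays continuous on $[-1,1]$ and nonnegative yet increases monotonically to $D/\sigma$, and one must carefully separate the two nonnegative contributions $\int(D/\sigma)\,d\varrho_u^\pm$ in the limit so as to avoid any $\infty-\infty$ indeterminacy (specifically: if $W_m(u_f)=+\infty$ then the right-hand side diverges and forces $H_T(u)=+\infty$; if $W_m(u_f)<+\infty$ then both contributions are bounded by $H_T(u)+W_m(u_f)$ and the identity follows by subtraction). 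Once \eqref{e:ubvplan1} is secured, taking the infimum over admissible paths and invoking the involutive nature of $u\mapsto\tilde u$ yields the claimed equality $V(m,u_f)=V(Pu_f,m)+W_m(u_f)$.
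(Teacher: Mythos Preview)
Your proposal is correct and follows essentially the same route as the paper: both exploit the time--space reversal $P^T$ to relate $\varrho_{P^T u}^+$ to $\varrho_u^-$, both test the kinetic formulation \eqref{e:2.6} against (an approximation of) the Einstein entropy $h_m$ using the $C([0,T];L^1(\bb T))$ regularity of $\mc E_T$ to handle the time boundary, and both pass to the limit by monotone convergence when $\sigma$ may vanish at $\pm 1$. The only cosmetic difference is that the paper records the identity \eqref{e:ubvplan1} and then deduces one inequality by optimizing and the reverse by swapping $u_f\leftrightarrow Pu_f$, whereas you obtain equality in one stroke from the bijection $u\mapsto P^T u$; these are equivalent.
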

\begin{proof}
  By the assumptions on $f$, as remarked below
  Proposition~\ref{p:kin}, $\mc E_{T} \subset
  C\big([0,T];L^1(\mathbb{T})\big)$. In particular equations
  \eqref{e:2.5}-\eqref{e:2.6} extend to any $\vartheta \in
  C^{2,\infty}\big([-1,1]\times [0,T]\times \bb T \big)$ (now
  $\vartheta(0)$ and $\vartheta(T)$ need not to vanish) as
\begin{eqnarray}
\label{e:5.2}
\nonumber
& & \int_{\bb T} \!dx\, \vartheta\big(u(T,x),T,x \big)-  
\vartheta\big(u(0,x),0,x \big)
\\
\nonumber
& & -\int_{[0,T]\times \bb T} \! dt\,dx\,
\Big[\big(\partial_t \vartheta)\big(u(t,x),t,x \big) 
+ \big(\partial_x Q\big)\big(u(t,x),t,x \big)\Big] 
\\
\quad
& & =\int_{[-1,1]\times [0,T] \times \bb T} \! dv\,
\varrho_u(v;dt,dx)\,\vartheta''(v,t,x)
\end{eqnarray}
Note that for $u \in \mc E_T$ and $v \in [-1,1]$
\begin{equation*}
\varrho_{P^T
  u}(v;dt,dx)= -\varrho_{u}(v; d(T-t),d(-x))
\end{equation*}
Therefore assuming also $u(0)\equiv m$, $u(T)=u_f$, we have for each
$\eta \in C^2([-1,1])$ with $\eta(m)=0$
\begin{eqnarray}
\label{e:5.3}
& &  \nonumber
\int \!dv\,\eta''(v) \varrho_u^+(v;dt,dx) 
- \int \!dv\,  \eta''(v) \varrho_{P^T u}^+(v;dt,dx)
\\ & &  \nonumber \quad
=\int \!dv\,\eta''(v) \varrho_u^+(v;dt,dx) 
- \int \!dv\,
               \eta''(v) \varrho_u^-(v;d(T-t),-dx)
\\ & & \nonumber \quad
=\int \!dv\,\eta''(v)
                \varrho_u^+(v;dt,dx) 
- \int \!dv\,\eta''(v)
                \varrho_u^-(v;dt,dx)
\\ & &  \nonumber \quad
=\int \!dv\,\eta''(v)
                \varrho_u(v;dt,dx) =
\int\!dx\,\eta(u(T,x))-\eta(u(0,x))
\\ & & \quad
= \int \!dx\,\eta(u_f(x))
\end{eqnarray}
where we used \eqref{e:5.2} with $\theta(v,t,x)=\eta(v)$. If $\sigma$
is bounded away from $0$, then \eqref{e:5.3} evaluated for $\eta=h_m$
immediately yields
\begin{equation}
\label{e:5.3.1}
 H_T(u)= H_T(P^Tu) + W_m(u_f)
\end{equation}
If $\sigma$ vanishes at $-1$ or $+1$, then \eqref{e:5.3.1} is obtained
by monotone convergence, when considering in \eqref{e:5.3} a sequence
$\{\eta^n\} \subset C^2([-1,1])$ such that: $\eta^n(m)=0$; $0\le
(\eta^n)'' \le h_m''$; and for all $v\in [-1,1]$, $\eta^n(v) \uparrow
h_m(v)$ and $(\eta^n)''(v) \uparrow h_m''(v)$.

Optimizing in \eqref{e:5.3.1} over $T$ and $u$ we get $V(m,u_f) \ge
V(Pu_f,m)+ W_m(u_f)$. Replacing $u_f$ by $P u_f$ and thus $P u_f$ by
$P(Pu_f)=u_f$, we get the reverse inequality.
\end{proof}

\begin{definition}
  \label{d:piececon}
  We say that $u_i \in U$ is \emph{piecewise constant} iff there is a
  finite partition of $\bb T$ in intervals such that $u_i$ is constant
  on each interval. For $T>0$, we say that $u \in \mc X_T$ is
  \emph{piecewise constant} iff $u \in C\big([0,T];L_1(\bb T)\big)$
  and there exists a finite partition of $[0,T]\times \bb T$ in
  connected sets with Lipschtiz boundary such that $u$ is constant on
  each set of these.
\end{definition}

The following lemma is the main technical difficulty of this paper,
and its proof is postponed at the end of this section.
\begin{lemma}
  \label{l:connect}
  Assume the same hypotheses of Theorem~\ref{t:quasipot}-(ii).  For
  each $\gamma >0$, there exist $T^\gamma,\,\delta^\gamma>0$ such that
  the following holds. For each \emph{piecewise constant} $u_i \in U$
  satisfying $\int_{\bb T}\!dx\,u_i(x)=m$ and
  $\|u_i-m\|_{L_{\infty}(\bb T)}\le \delta^\gamma$, there exists
  $u^\gamma \in \mc X_{T^\gamma}$ such that $u^\gamma(0)=u_i$,
  $u^\gamma(T^\gamma)\equiv m$ and $H_{T^\gamma}(u^\gamma)\le \gamma$.
\end{lemma}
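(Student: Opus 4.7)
The plan is to construct $u^\gamma$ in two phases, exploiting the cubic dependence of the entropy cost on shock amplitudes in order to control the total cost uniformly in the number of pieces of $u_i$. The key quantitative input is a Taylor expansion of $f$ around $m$: any shock $(u^-,u^+)$ with $u^\pm\in [m-\eta,m+\eta]$ satisfies
\begin{equation*}
|n^x|\int dv\,\frac{D(v)}{\sigma(v)}\frac{\rho^+(v,u^+,u^-)}{|u^+-u^-|}
\le C|u^+-u^-|^3,
\end{equation*}
with $C$ depending only on $\|f''\|_{L^\infty([m-\delta_0,m+\delta_0])}$ and $\max_{[m-\delta_0,m+\delta_0]}D/\sigma$. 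Via \eqref{e:2.8}, any piecewise-constant entropy-measure solution $u$ whose shocks all have amplitude $\le\eta$ therefore satisfies $H_T(u)\le C\eta^3\,\mathcal{H}^1(J_u)$, so it suffices to exhibit a construction reaching the constant $m$ in finite time whose non-entropic shocks have amplitude and total length jointly controlled by $\gamma$.

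First I would evolve $u_i$ by the Kruzkhov solution of \eqref{e:1.1} on a time interval $[0,T_1]$; by \eqref{e:2.7} this contributes zero cost, since Kruzkhov solutions have $\varrho_u\le 0$ and hence $\varrho_u^+=0$. The hypothesis that $\{v:f''(v)=0\}\cap[m-\delta_0,m+\delta_0]$ is finite, together with the maximum principle for scalar conservation laws, keeps $u(t,\cdot)$ in a neighborhood on which $f$ is strictly convex (or strictly concave) except at isolated points, and the classical Oleinik--Lax $L^\infty$-decay on the torus then yields $\|u(T_1,\cdot)-m\|_\infty\le C_1 T_1^{-\alpha}$ for some $\alpha>0$ (with $\alpha=1$ when $f''(m)\ne 0$, and $0<\alpha<1$ depending on the order of vanishing of $f''$ at $m$ otherwise). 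Set $\eta:=C_1 T_1^{-\alpha}$.

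In the second phase, on $[T_1,T_1+T_2]$ with $T_2=O(\eta^{-1})$, I would construct a non-entropic weak solution taking $u(T_1,\cdot)$ to the constant $m$, modeled on the simplest two-piece Burgers example. Select a single discontinuity of $u(T_1,\cdot)$ across which the local convexity of $f$ makes the splitting $u^-\to m\to u^+$ into an expanding wedge admissible; mass balance together with the sign of $f''$ near $m$ ensures such a discontinuity exists. The two wedge boundaries are non-entropic shocks of amplitude $\le\eta$, and their Rankine--Hugoniot speeds differ by an amount of order $\eta f''(m)$ (or a suitable power of $\eta$ in the degenerate case), so the wedge fills $\bb T$ in time $T_2$; outside the wedge the Kruzkhov solution continues to evolve entropically and is absorbed by the expanding wedge. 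At $T^\gamma:=T_1+T_2$ we then have $u^\gamma(T^\gamma)\equiv m$, and the cubic bound gives a cost contribution $\le 2C\eta^3\cdot O(T_2)=O(\eta^2)=O(C_1^2 T_1^{-2\alpha})$. Choosing $T_1$ large enough (depending only on $\gamma$) makes this at most $\gamma$; Remark~\ref{r:add} concatenates the two phases into a single $u^\gamma\in\mc X_{T^\gamma}$.

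The main obstacle lies in controlling the interactions between the expanding $m$-wedge and the other shocks of $u(T_1,\cdot)$: when a wedge boundary meets an external shock, the resulting local Riemann problem may produce further non-entropic structure whose contribution to $H_{T^\gamma}(u^\gamma)$ must also be quantified. Since all amplitudes remain $O(\eta)$ by the maximum principle, and the local convexity type of $f$ is essentially fixed by the hypothesis on the zeros of $f''$, one expects these secondary shocks themselves to have amplitude $O(\eta)$ and total lifetime $O(T_2)$, yielding an aggregate cost still bounded by $O(\eta^2)$; making this estimate rigorous through a careful front-tracking analysis is the crux of the argument. The no-affine-interval hypothesis enters in order to invoke $\mc E_T\subset C([0,T];L^1(\bb T))$ (cf.\ the remark after Proposition~\ref{p:kin}) when concatenating the two phases into a bona fide entropy-measure solution.
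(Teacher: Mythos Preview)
Your overall cost heuristic (cubic estimate per shock, hence total cost $O(\eta^2)$) is exactly the right quantitative input, but the construction has a gap more serious than the interaction issue you flag at the end. The lemma demands a \emph{single} $T^\gamma$ working for every admissible $u_i$, and your $m$-wedge does not expand at a rate bounded below uniformly in $u_i$: its boundaries are shocks between $m$ and the neighboring external values, so their speed difference is $O(\|u(T_1,\cdot)-m\|_{L^\infty})$. The Oleinik bound after Phase~1 is only an \emph{upper} bound $\eta$ on this oscillation; by the maximum principle the actual oscillation can be any $\eta'\in(0,\eta]$, and then $T_2\sim 1/\eta'$ is unbounded as $\eta'\downarrow 0$. (Phase~1 is in fact superfluous since $\delta^\gamma$ is at your disposal, but dropping it does not cure the non-uniformity.) There is also a directional obstruction in the strictly convex case~(A): after Kruzkhov evolution every discontinuity is a downward jump $u^->u^+$, so no shock with $u^-<m<u^+$ survives, and at a downward shock the splitting $u^-\to m\to u^+$ produces a \emph{contracting} wedge. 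Mass balance does guarantee a crossing of $m$, but in the Kruzkhov solution that crossing sits inside a rarefaction, not at a discontinuity.

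The paper sidesteps both obstacles by not aiming directly at the value $m$. One first fixes auxiliary heights $\delta_1,\delta_2$ depending only on $\gamma$, with $\delta^\gamma\ll\delta_1\wedge\delta_2$, and overlays on the evolving solution a two-level wedge taking the values $m+\delta_1$ and $m-\delta_2$, separated by a single anti-entropic shock. Because $\delta_1,\delta_2$ are fixed, the outer wedge boundaries $s_1,s_2$ move at speeds uniformly separated (cf.\ \eqref{e:ucost2}) \emph{regardless of $u_i$}, and the wedge swallows $\bb T$ in time $T\le\upbar T$ with $\upbar T$ depending only on $\gamma$. At time $T$ the profile is the explicit two-value datum $u_d^{\delta_1,\delta_2}$, and only then does one launch the $m$-wedge you had in mind (Lemma~\ref{l:connect1}); it now expands at the fixed rate $|R(\delta_1,0)-R(0,-\delta_2)|$ because the external values are the fixed $m+\delta_1$, $m-\delta_2$. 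Finally, the external evolution underneath the two-level wedge is not the Kruzkhov solution but a bespoke piecewise-constant weak solution (Lemma~\ref{l:connect2}), obtained by splitting each shock of $u_i$ into one entropic piece plus $M$ tiny anti-entropic pieces; this keeps every object piecewise constant, so the ODEs for $s_1,s_2$ and all shock collisions are explicit and provably finite in number, and its own cost is driven below $\gamma/4$ by taking $M$ large. Your cubic estimate is then applied with the \emph{fixed} amplitudes $\delta_1,\delta_2$ rather than with the uncontrolled oscillation of $u_i$, which is what yields a bound depending on $\gamma$ alone.
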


The next corollary relaxes the condition in Lemma~\ref{l:connect}
requiring $u_i$ to be piecewise constant.
\begin{corollary}
  \label{c:connect}
  Assume the same hypotheses of Theorem~\ref{t:quasipot}-(ii).  For
  each $\gamma >0$, there exist $T^\gamma,\,\delta^\gamma>0$ such that
  the following holds. For each $u_i \in U$ satisfying $\int_{\bb
    T}\!dx\,u_i(x)=m$ and $\|u_i-m\|_{L_{\infty}(\bb T)}\le
  \delta^\gamma$, there exists $u^\gamma \in \mc X_{T^\gamma}$ such
  that $u^\gamma(0)=u_i$, $u^\gamma(T^\gamma)\equiv m$ and
  $H_{T^\gamma}(u^\gamma)\le \gamma$.
\end{corollary}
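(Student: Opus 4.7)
The plan is to reduce to Lemma~\ref{l:connect} by approximating a general initial datum by piecewise constant ones and then extracting a limit via coercivity and lower semicontinuity of $H_{T^\gamma}$.

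First I would take $T^\gamma$ and $\delta^\gamma$ to be exactly the constants provided by Lemma~\ref{l:connect} (for the same $\gamma$). Given $u_i\in U$ with $\int_{\bb T}\!dx\,u_i(x)=m$ and $\|u_i-m\|_{L_\infty(\bb T)}\le \delta^\gamma$, I would construct a piecewise constant approximation in the sense of Definition~\ref{d:piececon} by averaging on a uniform partition: for every $n\ge 1$, partition $\bb T$ into consecutive intervals $I^n_1,\dots,I^n_n$ of equal length $1/n$ and set
\begin{equation*}
u_i^n(x):=\sum_{k=1}^{n}\id_{I^n_k}(x)\,\frac{1}{|I^n_k|}\int_{I^n_k}\!dy\,u_i(y).
\end{equation*}
Each $u_i^n$ is piecewise constant, its value on $I^n_k$ is a convex combination of values of $u_i$ (which lie in $[m-\delta^\gamma,m+\delta^\gamma]$), so $\|u_i^n-m\|_{L_\infty(\bb T)}\le \delta^\gamma$; moreover $\int_{\bb T}\!dx\,u_i^n=\int_{\bb T}\!dx\,u_i=m$. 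Standard density of step functions gives $u_i^n\to u_i$ in $L_2(\bb T)$, and since the embedding $L_2(\bb T)\hookrightarrow H^{-1}(\bb T)$ is continuous, $u_i^n\to u_i$ in $U$.

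Second, I would apply Lemma~\ref{l:connect} to each $u_i^n$ to obtain, for every $n$, a path $u^n\in\mc X_{T^\gamma}$ with $u^n(0)=u_i^n$, $u^n(T^\gamma)\equiv m$ and $H_{T^\gamma}(u^n)\le\gamma$. The coercivity of $H_{T^\gamma}$ recalled below \eqref{e:2.7} implies that the sublevel set $\{H_{T^\gamma}\le\gamma\}$ is relatively compact in $\mc X_{T^\gamma}$, so up to a subsequence (not relabeled) $u^n\to u^\gamma$ in $\mc X_{T^\gamma}$.

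Third, I would read off the conclusions. Since the metric $d_{\mc X_{T^\gamma}}$ controls $\sup_{t}d_U(u^n(t),u^\gamma(t))$, the endpoint evaluations are continuous, hence $u^\gamma(0)=\lim_n u^n(0)=\lim_n u_i^n=u_i$ in $U$ and $u^\gamma(T^\gamma)=\lim_n u^n(T^\gamma)\equiv m$. Lower semicontinuity of $H_{T^\gamma}$ (recalled below \eqref{e:2.7}) then yields $H_{T^\gamma}(u^\gamma)\le\varliminf_n H_{T^\gamma}(u^n)\le\gamma$, which is the desired bound.

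I do not expect a genuine obstacle here: all the hard work lies in Lemma~\ref{l:connect} itself. The only points to verify carefully are that the averaging approximation preserves both the mean constraint $\int u_i^n=m$ and the $L_\infty$ bound $\|u_i^n-m\|_{L_\infty}\le\delta^\gamma$ (both immediate from convexity), and that piecewise constancy of $u_i^n$ in the sense of Definition~\ref{d:piececon} is ensured by construction. The rest is the standard compactness/lower-semicontinuity package available for $H_{T^\gamma}$.
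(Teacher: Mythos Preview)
Your proposal is correct and follows essentially the same route as the paper: approximate $u_i$ by piecewise constant data satisfying the same mean and $L_\infty$ constraints, apply Lemma~\ref{l:connect}, then pass to the limit using the compact sublevel sets and lower semicontinuity of $H_{T^\gamma}$ together with continuity of endpoint evaluation in $\mc X_{T^\gamma}$. Your explicit averaging construction is a clean way to produce the approximants while manifestly preserving both constraints; the paper simply asserts existence of such a sequence.
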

\begin{proof}
  For a fixed $\gamma>0$, let $T^\gamma$ and $\delta^\gamma>0$ be as
  in Lemma~\ref{l:connect}. For $u_i \in U$ such that $|u_i-m|\le
  \delta^\gamma$, where $m=\int_{\bb T}\!dx\,u_i(x)$, let $\{u_i^n\}
  \subset U $ be a sequence of piecewise smooth functions converging
  to $u_i$ in $U$ and satisfying $\int_{\bb T}\!dx\,u_i^n(x)=m$ and
  $\|u_i-m\|_{L_{\infty}(\bb T)} \le \delta^\gamma$. For each $n$, by
  Lemma~\ref{l:connect} there exist $u^{n,\gamma}$ such that
  $u^{n,\gamma}(0)=u_i^n$, $u^{n,\gamma}(T^\gamma)\equiv m$ and
  $H_{T^\gamma}(u^{n,\gamma})\le \gamma$. Therefore, since
  $H_{T^\gamma}$ has compact sublevel sets (see \cite[Proposition
  2.6]{BBMN08}), there is a (not relabeled) subsequence
  $\{u^{n,\gamma}\}$ converging to a $u^\gamma$ in $\mc X_{T^\gamma}$,
  and $H_{T^\gamma}(u^\gamma)\le \gamma$. By the definition of
  convergence in $\mc X_{T^\gamma}$, $u^{n,\gamma}(0)$ and
  $u^{n,\gamma}(T^\gamma)$ converge in $U$ to $u^\gamma(0)$ and
  $u^\gamma(T^\gamma)$ respectively, and thus $u^\gamma(0)=u_i$ and
  $u^\gamma(T^\gamma)\equiv m$.
\end{proof}

We recall a result in \cite[Chap.\ 11.5]{D05}.
\begin{theorem}
\label{t:infconv}
Assume $f \in C^2([-1,1])$, and that there is no interval in which $f$
is affine. Let $u_i \in U$ and let $\bar{u}:[0,\infty)\times \bb T \to
\bb R$ be the Kruzkhov solution to \eqref{e:1.1} with initial datum
$u_i \in U$. Then 
\begin{equation*}
\lim_{t \to \infty}
\|\bar{u}(t)-m\|_{L_\infty([0,T]\times \bb T)} =0
\end{equation*}
where $m=\int_{\bb T}\!dx\,u_i(x)$.
\end{theorem}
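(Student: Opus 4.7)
The approach combines a strict Lyapunov functional, compensated compactness for scalar conservation laws, and a classification of stationary Kruzkhov solutions on the torus.

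Pick any strictly convex $h_m \in C^2([-1,1])$ with $h_m(m)=h_m'(m)=0$ and let $q_m$ be the conjugate flux $q_m'=h_m'f'$. The Kruzkhov entropy inequality reads $\partial_t h_m(\bar u) + \partial_x q_m(\bar u) \le 0$ in $\mathcal{D}'$; integrating in $x$ over $\bb T$ the functional $L(t) := \int_{\bb T}\!dx\, h_m(\bar u(t,x))$ is non-increasing and bounded below, hence $L(t) \downarrow \ell \ge 0$. For any sequence $t_n \uparrow \infty$ set $u_n(s,x) := \bar u(t_n+s,x)$: these are equibounded entropy solutions to \eqref{e:1.1}, and by the Tartar--Murat--DiPerna compensated compactness theorem (applicable exactly because $f$ has no affine intervals) a subsequence converges in $L_{1,\mathrm{loc}}([0,\infty)\times \bb T)$ to an entropy solution $v$. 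By bounded convergence $L(v(s,\cdot)) \equiv \ell$, so the $h_m$-entropy dissipation along $v$ vanishes identically.

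Strict convexity of $h_m$ rules out admissible shocks of $v$, so $v$ is classical on $(0,\infty)\times \bb T$ and satisfies $\partial_t v + f'(v)\, \partial_x v = 0$. On the compact torus, global smoothness forces $f'(v(s,\cdot))$ to be constant in $x$ for every $s$ (otherwise distinct characteristics would collide in finite time producing a shock); the no-affine hypothesis then forces $v(s,\cdot)$ itself to be spatially constant, and conservation of the spatial mean pins that constant to $m$. Hence every $\omega$-limit point of $\{\bar u(t)\}$ in $U$ equals $m$, i.e.\ $\bar u(t)\to m$ in $L_1(\bb T)$.

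The upgrade to $L_\infty$ convergence uses the smoothing effect of the Kruzkhov semigroup: under the hypotheses on $f$ there is, for each $t_0>0$, a uniform bound $\|\bar u(t)\|_{BV(\bb T)}\le C_{t_0}$ for $t\ge t_0$ (Oleinik-type one-sided estimate); a sequence in one space dimension bounded in $BV$ and convergent in $L_1$ to a constant converges also in $L_\infty$. The most delicate step is the classification of the $\omega$-limit set as $\{m\}$: the no-affine hypothesis on $f$ is indispensable, since otherwise stationary non-constant entropy profiles could exist on $\bb T$ and the $\omega$-limit could be strictly larger than $\{m\}$.
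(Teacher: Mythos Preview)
The paper does not prove this statement; it is quoted from \cite[Chap.~11.5]{D05}, where the argument proceeds through generalized characteristics rather than your Lyapunov/compactness route. Your LaSalle-type strategy is natural in spirit, but two steps are not justified under the stated hypotheses. First, from vanishing $h_m$-dissipation you conclude that the $\omega$-limit $v$ is a \emph{classical} solution and then invoke colliding characteristics. Vanishing dissipation for one strictly convex entropy does force $\varrho_v\equiv 0$ (via Proposition~\ref{p:kin}), so all entropy productions vanish, but this alone does not yield $C^1$ or even continuity of $v$; the characteristic-collision argument you run presupposes at least Lipschitz regularity in $x$, which you have not established for non-convex $f$.

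Second, the uniform $BV$ bound you invoke for the $L_1\to L_\infty$ upgrade (``Ole\u{\i}nik-type one-sided estimate'') requires $f''$ of a fixed sign; when $f''$ changes sign no such one-sided inequality holds, and $BV$ regularization under the bare hypothesis ``$f$ nowhere affine'' is a subtle matter. Even granting a $BV$ bound, the implication ``bounded in $BV$ and $L_1$-convergent to a constant $\Rightarrow$ $L_\infty$-convergent'' is false as stated: $g_n:=m+\id_{[0,1/n]}-\tfrac1n$ on $\bb T$ has total variation $2$, tends to $m$ in $L_1$, yet $\|g_n-m\|_{L_\infty}\to 1$. For strictly convex $f$ the Ole\u{\i}nik inequality itself already gives $\|\bar u(t)-m\|_{L_\infty}=O(1/t)$ directly, without the $L_1$ detour; for non-convex $f$ a genuinely different argument (e.g.\ via generalized characteristics as in \cite{D05,KP87}) is required.
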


\begin{proof}[Proof of Theorem~\ref{t:quasipot}-(ii)]
  Fix $u_i\in U$ and $\gamma>0$. Let $T^\gamma$ and $\delta^\gamma$ be
  as in Corollary~\ref{c:connect}, let $\bar{u}:[0,+\infty) \to U$ be
  the Kruzkhov solution to \eqref{e:1.1} with initial datum $u_i$, and
  let $m=\int_{\bb T}\!dx\,u_i(x)$. By Theorem~\ref{t:infconv}, there
  exists $\tau^\gamma$ such that $|\bar{u}(\tau^\gamma)-m|\le
  \delta^\gamma$. By Corollary~\ref{c:connect} there exists $\tilde{u}
  \in \mc X_{T^\gamma}$ such that $\tilde{u}(0)=\bar{u}(\tau^\gamma)$,
  $\tilde{u}(T^\gamma)\equiv m$ and $H_{T^\gamma}(\tilde{u}) \le
  \gamma$. Define $u \in \mc X_{\tau^\gamma +T^\gamma}$ by
  \begin{equation*}
u(t,x):= 
  \begin{cases}
    \bar{u}(t,x) & \text{if $t\le \tau^\gamma $}
    \\
    \tilde{u}(t-\tau^\gamma,x) & \text{if $\tau^\gamma \le t \le
      \tau^\gamma+T^\gamma$}
  \end{cases}
  \end{equation*}
  Then, by Remark~\ref{r:add},
  $H_{\tau^\gamma+T^\gamma}(u)=H_{\tau^\gamma}(\bar{u})
  +H_{T^\gamma}(\tilde{u}) \le \gamma $. Therefore $V(u_i,m)=0$ and
  the proof is thus complete since Lemma~\ref{l:ubv} holds.
\end{proof}

The remaining of this section is devoted to the proof of
Lemma~\ref{l:connect}.
\begin{remark}
\label{r:closem}
Let $m\in (-1,1)$, assume the same hypotheses of
Theorem~\ref{t:quasipot}-(ii), and let $\delta_0$ be defined
accordingly. Then, taking perhaps a smaller $\delta_0$, one can assume
$[m-\delta_0,m+\delta_0] \subset (-1,1)$ and that one (and only one)
of the following holds:
\begin{itemize}
\item[\textrm{(A)}]{in the interval $[m-\delta_0,m+\delta_0]$, $f$ is
    either strictly convex or strictly concave.}
\item[\textrm{(B)}]{$f$ is either strictly convex in $[m-\delta_0,m]$
    and strictly concave in $[m,m+\delta_0]$, or strictly concave
    in $[m-\delta_0,m]$ and strictly convex in $[m,m+\delta_0]$.}
\end{itemize}
With no loss of generality, we will assume $f$ convex in
$[m-\delta_0,m+\delta_0]$ if case (A) holds, and $f$ concave in
$[m-\delta_0,m]$ and convex in $[m,m+\delta_0]$ if (B)
holds. 
\end{remark}

\begin{remark}
\label{r:piececon}
Let $T>0$ and assume $u \in \mc E_T$ to be piecewise constant
according to Definition~\ref{d:piececon}. Then the jump set of $u$
consists of a finite number of segments in $[0,T]\times \bb T$. In
particular there exist a finite sequence $0=T^0<T^1<\ldots<T^n=T$,
and, for $k=1,\ldots,n$, finite sequences $\{w^k_j\}_{j=1,\ldots,N_k}
\subset \bb [-1,1]$ such that for $t \in (T^{k-1},T^k)$, $u(t)$ is
piecewise constant with jump set consisting of a finite set of points
$\{x^k_j(t)\}_{k=1,\ldots,N_k} \in \bb T$, and the traces of $u(t)$ at
$x^k_j(t)$ are $w^k_j$ (from the right) and $w^{k}_{j-1}$ (from the
left, where we understand $w^k_0\equiv w^{k}_{N_k}$).

In particular, by \eqref{e:2.8} we have that
\begin{equation}
\label{e:piececost}
  H_T(u)= \sum_{k=1}^n (T^k-T^{k-1}) \sum_{j=1}^{N_k} \int\!dv\,
 \frac{D(v)}{\sigma(v)} \frac{\rho^+(v,w^k_j,w^{k-1}_j)}{|w^{k}_j-w^{k-1}_j|}
\end{equation}
\end{remark}
If $u \in \mc E_T$ is piecewise constant, and $u^-$, $u^+$ are the
left and right traces of $u$ at a given point in the jump set of $u$,
we say that the shock between $u^-$ and $u^+$ is \emph{entropic} iff
$\rho(v,u^-,u^+)\le 0$ for almost every $v$, while it is
\emph{anti-entropic} iff $\rho(v,u^-,u^+)\ge 0$ for almost every
$v$. If $f$ is convex or concave, each shock is either entropic or
anti-entropic, but in the general case the sign of $\rho(v,u^-,u^+)$
may depend on $v$.

\begin{lemma}
  \label{l:connect2}
  Let $m\in (-1,1)$, and $\delta_0 \equiv \delta_0(m)>0 $ be as in
  Remark~\ref{r:closem}. Let $u_i \in U$ be piecewise constant and
  such that $\int_{\bb T}\!dx\,u_i(x)=m$ and $\|u_i-m\|_{L_\infty(\bb
    T)} \le \delta_0$. Then for each $\upbar{T},\,\gamma>0$ there
  exists $w \in \mc X_{\upbar{T}}$ piecewise constant such that
  $\|w-m\|_{L_\infty([0,T]\times \bb T)} \le \|u_i-m\|_{L_\infty(\bb
    T)}$, $w(0)=u_i$, and $H_{\upbar{T}}(w)\le \gamma$.
\end{lemma}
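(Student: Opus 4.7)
The plan is to take $w$ to be a \emph{front-tracking} approximation of the Kruzkhov solution starting from $u_i$. Since $u_i$ is piecewise constant with values in $[m-\delta_0,m+\delta_0]$, each of its finitely many initial jumps between values $u^\pm$ defines a Riemann problem whose entropy solution is described by the lower convex (respectively upper concave) envelope of $f$ on $[u^-\wedge u^+,u^-\vee u^+]$. By Remark~\ref{r:closem}, this envelope consists of at most one rarefaction fan plus entropic shocks in case (A), and of at most two rarefaction fans plus entropic shocks in case (B), since $[m-\delta_0,m+\delta_0]$ contains at most one inflection point of $f$. I would replace each rarefaction of total strength $\Delta$ by a fan of $N$ equally spaced anti-entropic shocks of strength $\Delta/N$ (each with Rankine--Hugoniot speed), yielding a configuration which is piecewise constant just after $t=0$.

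The construction is continued past collisions of fronts by solving each new Riemann problem in the same way. Standard scalar front-tracking theory ensures that the total variation of $w(t,\cdot)$ is non-increasing, so the total strength of waves and the number of fronts stay uniformly bounded on $[0,\upbar T]$; thus $w\in\mc X_{\upbar T}$ is piecewise constant in the sense of Definition~\ref{d:piececon}, with $w(0)=u_i$. By the $L_\infty$ maximum principle of the scheme, the range of $w$ remains inside $[\min u_i,\max u_i]$, so that $\|w-m\|_{L_\infty([0,\upbar T]\times\bb T)}\le\|u_i-m\|_{L_\infty(\bb T)}$.

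To estimate the cost, observe that by \eqref{e:piececost} entropic shocks contribute zero since $\rho^+\equiv 0$ on them. For an anti-entropic shock between $u^-$ and $u^+$ with $\delta:=|u^+-u^-|$, the quantity $\rho(v,u^+,u^-)/|u^+-u^-|$ equals the difference between the chord joining $(u^\pm,f(u^\pm))$ and the graph of $f$ at height $v$; since $f\in C^2$, this difference is bounded by $\tfrac18\|f''\|_\infty\delta^2$ uniformly in $v\in[u^-\wedge u^+,u^-\vee u^+]$. Integrating in $v$ against the bounded weight $D/\sigma$ (bounded since $\sigma$ is uniformly positive on $[m-\delta_0,m+\delta_0]\subset(-1,1)$) yields a per-unit-time cost of $O(\delta^3)$ per anti-entropic front. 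A rarefaction of strength $\Delta$ discretized by $N$ small shocks therefore contributes $N\cdot O((\Delta/N)^3)=O(\Delta^3/N^2)$ per unit time. Since the total number of rarefactions produced in $[0,\upbar T]$ is bounded by a constant depending only on the initial number of jumps, $\upbar T$, and the Lipschitz constant of $f$ (via the bound on the number of interactions in finite time), choosing $N$ sufficiently large makes $H_{\upbar T}(w)\le\gamma$.

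The main technical obstacle is the uniform-in-$N$ control of the number of anti-entropic fronts and of wave interactions on $[0,\upbar T]$, particularly in case (B) where compound waves may appear both at $t=0$ and at subsequent collisions, potentially generating new rarefactions. This relies on the TV-diminishing property of scalar front tracking, on the uniform bound on $|f'|$ over $[m-\delta_0,m+\delta_0]$, and on the fact that rarefaction fans with the same orientation cannot collide: together these bound the number of interactions in any finite time window independently of $N$, so that the $O(1/N^2)$ per-interaction estimate can be summed and driven below $\gamma$.
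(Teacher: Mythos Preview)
Your proposal is essentially the paper's proof: build a front-tracking weak solution in which each initial jump is resolved into its entropic part plus a fan of $M$ equal small anti-entropic shocks, continue past interactions by re-solving the emerging Riemann problems in the same way, and use the $O(\delta^3)$ per-unit-time cost of a single anti-entropic shock of strength $\delta$ to obtain $H_{\upbar T}(w)=O(\upbar T\cdot\text{(number of anti-entropic fronts)}\cdot M^{-3})$, which is made $\le\gamma$ by taking $M$ large.

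The one place where the paper is more explicit than your sketch is exactly the obstacle you flag. Rather than invoking TV-decay or finite propagation speed, the paper gives a purely combinatorial bound on the number of anti-entropic fronts present at any time, namely $(2N-1)M$ with $N$ the number of jumps of $u_i$. The argument is: anti-entropic fronts from the same fan spread apart and hence never interact among themselves; an interaction involving only entropic shocks yields a single entropic shock; and an interaction mixing entropic and anti-entropic fronts either yields one entropic shock or at most $M$ anti-entropic ones. Thus every interaction that can \emph{create} anti-entropic fronts consumes at least one entropic shock, and the entropic-shock count is nonincreasing starting from $N$. This gives a bound independent of $\upbar T$ and of the Lipschitz constant of $f$ --- your formulation that the bound ``depends on $\upbar T$ and $\mathrm{Lip}(f)$ via the number of interactions in finite time'' is not how the paper closes the argument and would need a separate justification, whereas the combinatorial count works directly and keeps the total cost $\le C\,\upbar T\,(2N-1)M^{-2}$.
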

The proof of Lemma~\ref{l:connect2} will be divided in three
steps. The main idea is to construct a piecewise smooth weak solution
$w$, by splitting each shock appearing in the initial datum in an
entropic part and an anti-entropic part, the anti-entropic part being
split itself in $M$ small anti-entropic shocks, with $M$ a large
integer, see Figure~\ref{f:flux}. For such a solution to exist, the
points at which the shocks are split have to be carefully chosen. We
are then able to define $w$ up to the first time at which two (or
more) shocks collide. Defining then $w$ recursively, we prove that
there can be only a finite number of times at which the shocks
collide, and thus $w$ is well-defined globally in time. Finally, we
show that $H_{\upbar{T}}(w)$ can be made arbitrarily small by choosing
$M$ large.

\begin{proof}[Proof of Lemma~\ref{l:connect2}]
  As noted in Remark~\ref{r:closem}, we assume $f$ to be strictly
  convex in $[m,m+\delta_0]$. Hereafter we let
  $\delta:=\|u_i-m\|_{L_\infty(\bb T)} \le \delta_0$.

  \textit{Step 1: Evolution of shocks.}  Let $x_1,\ldots,\,x_N \in \bb
  T$ be the points at which the discontinuities of $u_i$ are
  located. With a little abuse of notation, we also denote by $u_i:\bb
  R \to [-1,1]$ and $x_j\in [0,1]\subset \bb R$ the lift of $u_i$ and
  $x_j$ to $\bb R$, and we assume $x_j < x_{j+1}$ for
  $j=1,\ldots,N-1$. For $j=1,\ldots,N$ and $n\in \bb Z$, let
  $x_{j+nN}=x_j+n \in \bb R$, and for $j \in \bb Z$ let $u^-_j$ and
  $u^+_j$ be respectively the right and left traces of $u_i$ at $x_j$.
  Define
\begin{equation*}
U_j:=
\begin{cases}
  \max \{w\in [u^-_j,u^+_j]\,:\: \rho(v,w,u_j^-)\le 0, \forall v\in [-1,1] \}
 & \text{if $u^-_j <u^+_j $ }
\\
  \min \{w\in [u^+_j,u^-_j]\,:\: \rho(v,w,u_j^-)\le 0, \forall v\in [-1,1] \}
 & \text{if $u^+_j <u^-_j $ }
\end{cases}
\end{equation*}
Since $f$ is convex in $[m,m+\delta_0]$, if $u^-_j <u^+_j $ and $U_j
\le v \le v' \le u^+_j $, or if $u^+_j <u^-_j $ and
$u^+_j \le v' \le v \le U_j $ then
\begin{equation*}
  \frac{f(u^-_j)-f(U_j)}{u^-_j-U_j } 
\le 
  \frac{f(U_j)-f(v)}{U_j - v }
\le 
  \frac{f(v)-f(v')}{v - v' }
\end{equation*}
where we understand $\frac{f(w)-f(w)}{w-w}=f'(w)$ for $w\in [-1,1]$.

\begin{figure}[htb!]
\centering
\includegraphics[scale=0.5]{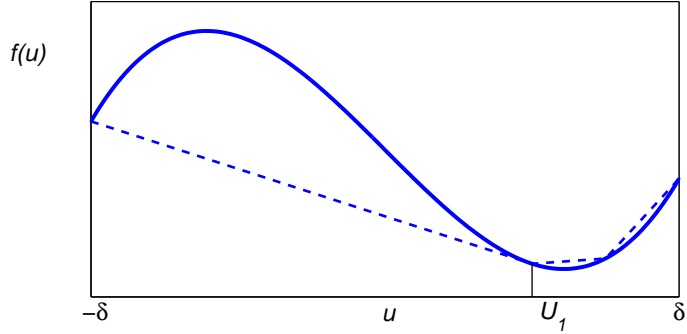}
\caption{In the figure, we have $f(u)=u^3-u$, $m=0$ and
  $M=2$. Consider a discontinuity between the values $u^-_1=-\delta$
  and $u^+_1=\delta$. Then $U_1$ is chosen as the abscissa of the
  point at which a line passing in $(-\delta,f(-\delta))$ is tangent
  to the graph of $f$. The values $U_1+\frac{k}M (u^+_1-U_1)$ for
  $k=1,2$ are the abscissas of the intersections of the dashed lines
  with the graph of $f$.  }
\label{f:flux}
\end{figure}

Therefore, fixed an integer $M\ge 2$, it is possible to define the map
$w_j^{u_i}:[0,+\infty) \times \bb R \to [m-\delta,m+\delta]$ as
\begin{equation}
\label{e:wj}
  w_j^{u_i}(t,x):=
  \begin{cases}
    u^-_j & \text{if $x -x_j \le \frac{f(u_j^-)-f(U_j)}{u_j^- - U_j}t$
    }
    \\
 U_j & \text{if $x -x_j \in [\frac{f(u_j^-)-f(U_j)}{u_j^- - U_j}t,
M \frac{f(U_j)- f( U_j+\frac{u^+_j-U_j}{M})}{U_j-u_j^+}t]$}
  \\
    U_j+\frac{k}M (u^+_j-U_j) & \text{if $x -x_j \in  
    \big[M \frac{f(U_j+\frac{k-1}M
        (u^+_j-U_j))- f( U_j+\frac{k}M (u^+_j-U_j))} 
        {U_j-u_j^+}t,$ }
   \\ & \text{ $M \frac{f(U_j+\frac{k}M
        (u^+_j-U_j))- f( U_j+\frac{k+1}M (u^+_j-U_j))} 
        {U_j-u_j^+}t\big] $} 
   \\ & \qquad \text{ for $k\in \{1,\ldots,\,M-1\}$ }
    \\
    u^+_j & \text{if $x -x_j \ge M \frac{f(U_j+\frac{M-1}M
        (u^+_j-U_j))- f(u^+_j)} {U_j-u_j^+}t$ }
  \end{cases}
\end{equation}
Note that this definition makes sense in the case $U_j=u_j^-$ or
$U_j=u_j^+$. We also let $X_j^-(t):=x_j+\frac{f(u_j^-)-f(U_j)}{u_j^- -
  U_j}t$, $X_j^+(t):=x_j+ \frac{f(U_j+\frac{M-1}M (u^+_j-U_j))-
  f(u^+_j)} {U_j-u_j^+}t$ and
\begin{equation*}
 T(u_i) := \inf\{t\ge 0\,:\:\min_j [X^-_j(t)-X^+_{j-1}(t)]=0 \}
\end{equation*}
We next define $w^{u_i}:[0,T(u_i)] \times \bb R \to
[m-\delta,m+\delta]$ as
\begin{equation*}
  w^{u_i}(t,x) = w_j^{u_i}(t,x) \qquad 
   \text{if $x\in [X_{j}^+(t),X_{j+1}^-(t)]$}
\end{equation*}
$w^{u_i}$ is a weak solution to \eqref{e:1.1} in $[0,T(u_i)] \times
\bb R$, since it is piecewise constant and satisfies the
Rankine-Hugoniot condition along the shocks.  Since it is also
$1$-periodic on $\bb R$, it defines a map $w^{u_i}:[0,T(u_i)]\times
\bb T \to [m-\delta,m+\delta]$ such that $w^{u_i}(0)=u_i$ and $w^{u_i}
\in \mc E_{T(u_i)}$.

\textit{Step 2: There is a finite number of shocks merging.}  We next
define recursively, for $k \ge 1$, $T^k \in [0,\upbar{T}]$ and
$w^k:[T^{k-1},T^k] \times \bb T \to [m-\delta,m+\delta]$ (where
$T^0=0$) by
\begin{eqnarray*}
  & & T^1 := T(u_i)\wedge \upbar{T}
  \\
  & & w^1:=w^{u_i}
  \\
  & & qT^k:=\big[ T^{k-1}+T(w^{k-1}(T^{k-1})) \big] \wedge \upbar{T} 
                  \qquad \text{for $k \ge 2$}
  \\
  & & w^k(t,x):=w^{w^{k-1}(T^{k-1})}(t-T^{k-1},x) \qquad \text{for $k \ge 2$}
\end{eqnarray*}
We want to show that there exists a $K\in \bb N$ such that
$T^K=\upbar{T}$.

By definition, for each $k \ge 1$ and $t\in (T^{k-1},T^k)$, the
discontinuities of $w^k(t)$ are either entropic or non-entropic. On
the other hand, at the times $T^k$ at which two or more shocks
collide, one and only one of the following may happen.
\begin{itemize}
\item[-]{At a point $y \in \bb T$, two or more entropic shocks of
    $w^k$ merge at time $T^k$. Then $w^{k+1}$ has one entropic shock
    in $[T^k,T^{k+1}]$ starting at $y$.}
\item[-]{At a point $y \in \bb T$ one or more entropic shocks of $w^k$
    merge with one or more anti-entropic shocks. Then either $w^{k+1}$
    has one entropic shock starting at $y$, or $w^{k+1}$ has
    $a$ anti-entropic shocks starting at $y$, for some integer $a$,
    $0\le a \le M$.}
\end{itemize}
Note that, at a time $T^k$, one or more of the merging here described
may happen at different points $y \in \bb T$, but at no point there
can be a shock merging involving only anti-entropic shocks. Therefore,
at a given shocks merging: either the number of entropic shocks stays
constant and the number of anti-entropic shocks decreases by at least
one; or the number of entropic shocks decreases by at least one,
and the number of anti-entropic shocks may either decrease, or
increase (by at most $M$). It follows that there can be at most a
finite number of shocks merging, and in particular a finite number of
times at which shocks merge. Recalling that $N$ was the total number
of discontinuity points of $u_i$, and that by definition $w^1$ has at
most $N$ entropic shocks and $N\,M$ anti-entropic shocks, it follows
that for each $k$, $w^k$ has at most $(2\,N-1)M $ anti-entropic
shocks, the remaining shocks being entropic. Therefore the sequence
$\{T^k\}$ has no accumulation points before $\upbar{T}$, and it will hit
$\upbar{T}$ for $k$ large enough.

\begin{figure}[htb!]
\includegraphics[height=80mm,width=60mm]{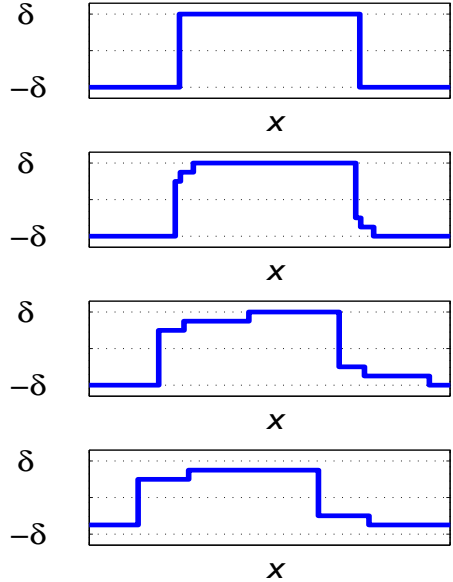}
\caption{In the figure, we have $f(u)=u^3-u$, $m=0$, and the initial
  datum $u_i$ having two jumps between the values $-\delta$ and
  $\delta$. The figure shows $w$ at different times $t\in [0,T(u_i)]$.
}
\label{f:w}
\end{figure}

\textit{Step 3: Computing $H_T$.} We can thus define $w:[0,\upbar{T}]
\times \bb T \to [m-\delta,m+\delta]$ by requiring $w(t,x)=w^k(t,x)$
for $t\in [T^{k-1},T^k]$. $w$ is piecewise constant and it satisfies
the Rankine-Hugoniot condition along the shocks, therefore, since
$w^{k-1}(T^k)=w^k(T^k)$, $w \in \mc E_{\upbar{T}}$. As noted above, in
each time interval $[T^{k-1},T^k]$, $w$ has at most $(2\,N-1)M $
shocks. Moreover, by the definition \eqref{e:wj}, the left and the
right traces of $w$ at an anti-entropic shock differ at most by
$2\delta_0/M$. Therefore we can bound the sum in \eqref{e:piececost}
as
\begin{eqnarray*}
  H_{\upbar{T}}(w)& \le & 
\upbar{T} (2\,N-1)\,M \sup_{\substack{
u^-,u^+\in [m-\delta_0,m+\delta_0]
\\
|u^+-u^-|\le \frac{2\delta_0}{M}} }
\int\!dv\,
 \frac{D(v)}{\sigma(v)} \frac{\rho^+(v,u^+,u^-)}{|u^+-u^-|}
\\ & \le &  
\upbar{T} (2\,N-1)\,M 
 \Big[\max_{v\in [m-\delta_0,m+\delta_0]} \frac{D(v)}{\sigma(v)}\Big]
\\
 & & \quad
\sup_{|u^+-u^-|\le \frac{2\delta_0}{M}}
\Big[ 
\frac{|u^+-u^-|}{2} \big[f(u^+)+f(u^-) \big] 
-\int_{u^-}^{u^+}\!dv\,f(v)
\Big]
\\ & = & 
\upbar{T} (2\,N-1)\,M 
 \Big[\max_{v\in [m-\delta_0,m+\delta_0]} \frac{D(v)}{\sigma(v)}\Big]
\\ & & \quad
\sup_{|u^+-u^-|\le \frac{2\delta_0}{M}}
\Big[ 
\frac{|u^+-u^-|}{2} \big[f(u^+)-f(u^-)-f'(u^-)(u^+-u^-) \big] 
\\ & &   \quad \phantom{
\sup_{|u^+-u^-|\le \frac{2\delta_0}{M}}
\Big[
}
-\int_{u^-}^{u^+}\!dv\,[f(v)-f(u^-)-f'(u^-)(v-u^-)
\Big]
\\ & \le & C\,\upbar{T}\,(2N-1)\,M^{-2}
\end{eqnarray*}
where in the last inequality we used the standard Taylor remainder
estimate and $C$ is a constant depending only on $f,\,
D,\,\sigma$. Namely, $H_{\upbar{T}}(w)$ is arbitrarily small provided
$M$ is chosen sufficiently large.
\end{proof}

In the following, whenever $m+\delta,\,m+\delta' \in [-1,1]$, we
introduce the short hand notation $R(\delta,\delta')$ for the
Rankine--Hugoniot velocity of a shock between the values $m+\delta$
and $m+\delta'$, namely
\begin{equation*}
  R(\delta,\delta'):= \frac{f(m+\delta)-f(m+\delta')}{\delta-\delta'}
\end{equation*}
and we understand $R(\delta,\delta)=f'(m+\delta)$. We also introduce
\begin{eqnarray*}
 C(\delta,\delta') & := &  
\int \!dv\, \frac{\rho(v,m+\delta,m+\delta')}{|\delta-\delta'|}
\\ 
& = & \frac{|\delta-\delta'|}{2} \big[f(m+\delta)+f(m+\delta') \big] 
-\int_{m+\delta'}^{m+\delta}\!dv\,f(v)
\end{eqnarray*}

The following lemma introduces an explicit solution to \eqref{e:1.1},
with initial datum having only two discontinuities and final datum
being constant.
\begin{lemma}
  \label{l:connect1}
  Assume the same hypotheses of Theorem~\ref{t:quasipot}-(ii) and let
  $\gamma>0$. Let $m \in (-1,1)$ and let $\delta_0 =\delta_0(m)$ be
  defined as in Remark~\ref{r:closem}. Then for each $\delta_1 \in
  (0,\delta_0)$ there exists $\bar{\delta}_2\equiv
  \bar{\delta}_2(\delta_1) \in (0,\delta_0)$ such that for each
  $\delta_2 \in (0,\bar{\delta}_2)$ the following holds. For a fixed
  arbitrary $x_0 \in \bb T$ let $u_d\equiv u_d^{\delta_1,\delta_2}\in
  U$ be defined as
\begin{equation*}
  u_d(x):=
  \begin{cases}
    m+\delta_1 & \text{if $|x-x_0|\le \frac{\delta_2}{2(\delta_1+\delta_2)} $}
\\
    m-\delta_2 & \text{otherwise}
  \end{cases}
\end{equation*}
and let
\begin{equation*}
  \tau\equiv \tau^{\delta_1,\delta_2}:=
  \frac{1}{|R(\delta_1,0) -R(0,-\delta_2)|}
\end{equation*}
Then $\tau<\infty$, and there exists $u \in \mc X_\tau$ such that
$u(0)=u_d$, $u(\tau)\equiv m$ and 
\begin{equation}
\label{e:ucost1}
H_\tau(u) \le  \Big[\max_{v\in [m-\delta_0,m+\delta_0]} \frac{D(v)}{\sigma(v)}\Big]
\frac{C(\delta_1,0)^+ + C(0,-\delta_2)^+}{|R(\delta_1,0) -R(0,-\delta_2)|}
\end{equation}
\end{lemma}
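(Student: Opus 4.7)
My plan is to construct an explicit piecewise constant weak solution $u$ on $[0,\tau]\times\bb T$ realizing the connection from $u_d$ to the constant state $m$, and then to evaluate its cost directly via the formula \eqref{e:piececost}. Abbreviate $x_L:=x_0-\frac{\delta_2}{2(\delta_1+\delta_2)}$, $x_R:=x_0+\frac{\delta_2}{2(\delta_1+\delta_2)}$ for the two jumps of $u_d$, and introduce the three Rankine--Hugoniot speeds $s_-:=R(0,-\delta_2)$, $s_+:=R(\delta_1,0)$, $s_0:=R(-\delta_2,\delta_1)$. In case (A) of Remark~\ref{r:closem} strict convexity of $f$ on $[m-\delta_0,m+\delta_0]$ directly gives $s_-<s_+$ and hence $\tau<\infty$; in case (B) the same strict inequality holds provided $\bar\delta_2(\delta_1)$ is taken small enough, since $s_-\to f'(m)<s_+$ as $\delta_2\downarrow 0$.

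The construction is: at the left discontinuity $x_L$ I split the jump $m-\delta_2\to m+\delta_1$ into two parallel shocks joined by an intermediate constant state $m$ -- a shock of speed $s_-$ at position $x_L+s_- t$ with traces $(m-\delta_2,m)$, and a shock of speed $s_+$ at position $x_L+s_+ t$ with traces $(m,m+\delta_1)$; at the right discontinuity $x_R$ I keep the original single shock $m+\delta_1\to m-\delta_2$ with speed $s_0$ at position $x_R+s_0 t$. The resulting $u(t,\cdot)$ is piecewise constant on the torus with at most three jumps and four arcs taking values in $\{m-\delta_2,m,m+\delta_1\}$, and each individual shock satisfies Rankine--Hugoniot by construction, so $u\in\mc E_\tau$. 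The key algebraic identity is
\begin{equation*}
s_0 \,=\, \frac{\delta_1\,s_+ + \delta_2\,s_-}{\delta_1+\delta_2},
\end{equation*}
which is immediate from the definitions by writing the numerator of $s_0$ as $(f(m+\delta_1)-f(m))+(f(m)-f(m-\delta_2))$. From this identity one reads off $s_-<s_0<s_+$, so no two shocks collide before $\tau$; moreover the $m+\delta_1$ arc has width $\frac{\delta_2}{\delta_1+\delta_2}-(s_+-s_0)t=\frac{\delta_2}{\delta_1+\delta_2}-\frac{\delta_2(s_+-s_-)}{\delta_1+\delta_2}\,t$, and the $m-\delta_2$ arc has width $\frac{\delta_1}{\delta_1+\delta_2}-\frac{\delta_1(s_+-s_-)}{\delta_1+\delta_2}\,t$, both of which collapse at the \emph{same} time $\tau=1/(s_+-s_-)$. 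Hence $u(\tau)\equiv m$ on all of $\bb T$.

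The cost bound \eqref{e:ucost1} then follows from \eqref{e:piececost} applied to the single time interval $[0,\tau]$ with three shock segments. The shock at $x_R$ satisfies $u^->u^+$ on a convex branch, hence is Lax-entropic with $\rho\le0$ and zero $\rho^+$-contribution; the two shocks at $x_L$ have $u^-<u^+$ on the same convex branch, hence are anti-entropic with $\rho\ge0$ on $[\min u^\pm,\max u^\pm]$, so their $\rho^+$-contributions equal $C(0,-\delta_2)$ and $C(\delta_1,0)$ respectively. Pulling out the supremum of $D/\sigma$ over $[m-\delta_0,m+\delta_0]$ yields the bound \eqref{e:ucost1}. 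The heart of the argument -- and the step I expect to be the main technical point -- is the simultaneous collapse of both non-entropic layers at the prescribed time $\tau$; without the weighted-average identity above, the two would extinguish at different times and one would be left with a residual non-constant profile, so the computation is tight. Case (B) of Remark~\ref{r:closem} requires only the choice of $\bar\delta_2$ described above, after which the same explicit construction and bookkeeping go through verbatim.
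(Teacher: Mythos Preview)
Your construction is exactly the paper's: split the left jump $m-\delta_2\to m+\delta_1$ through the intermediate state $m$ into two Rankine--Hugoniot shocks of speeds $s_-,s_+$, keep the right jump as a single shock of speed $s_0$, and read off the cost from \eqref{e:piececost}. Your weighted-average identity $s_0=(\delta_1 s_+ +\delta_2 s_-)/(\delta_1+\delta_2)$ is a clean way to see the simultaneous collapse at $\tau$, which the paper obtains instead by writing the explicit formula for $u(t,x)$.

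One imprecision in your treatment of case~(B): your choice of $\bar\delta_2$ only guarantees $s_-<s_+$, but you also need the right shock $(m+\delta_1)\to(m-\delta_2)$ to be entropic, and in case~(B) this shock straddles the inflection point of $f$, so it is \emph{not} ``on a convex branch''. The paper secures this by shrinking $\bar\delta_2$ further so that $\rho(v,m-\delta_2,m+\delta_1)<0$ for all $v\in(m-\delta_2,m+\delta_1)$, which follows by continuity from the strict inequality $\rho(v,m,m+\delta_1)<0$ on $(m,m+\delta_1)$. Likewise, in case~(B) the shock $(m-\delta_2,m)$ lies on the concave branch and is entropic (not anti-entropic as you write), so its $\rho^+$-contribution is zero; this only helps the bound, and is precisely why the statement carries the positive part $C(0,-\delta_2)^+$. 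With these two adjustments your ``verbatim'' claim for case~(B) becomes correct.
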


\begin{proof}
  Fix $\delta_1 \in (0,\delta_0)$. By the definition of $\delta_0$,
  $R(\delta_1,0)\neq f'(m)$ and assuming $f$ strictly convex in
  $[m,m+\delta_0]$ (see Remark~\ref{r:closem}), we have $R(\delta_1,0)
  > f'(m)$. Recalling the definition of $\rho$ in \eqref{e:2.8.5},
  still by the convexity of $f$ in $[m,m+\delta_0]$, we have
  $\rho(v,m,m+\delta_1)< 0$ for $v \in (m,m+\delta_1)$ and
  $C(\delta_1,0)>0$. In particular there exists $\bar{\delta}_2$ small
  enough such that for each $\delta_2 \in (0,\bar{\delta}_2)$ and each
  $v \in (m-\delta_2,m+\delta_1)$
 \begin{equation}
\label{e:d2small1}
R(\delta_1,0)-R(0,-\delta_2)>0
\end{equation}
\begin{equation}
\label{e:d2small2}
 \rho(v,m-\delta_2,m+\delta_1)< 0
\end{equation}

Let us now fix $\delta_2 \in (0,\bar{\delta}_2)$. By
\eqref{e:d2small1} $\tau^{\delta_1,\delta_2}$ is finite. With no loss
of generality we may assume $x_0=
\frac{\delta_2}{2(\delta_1+\delta_2)}$, as the general case is obtained
by a space translation of the solution $u$ given below by the
quantity $x_0-\frac{\delta_2}{2(\delta_1+\delta_2)}$. Define
  \begin{equation*}
   u(t,x):=
   \begin{cases}
     m & \text{ if $\big|x- [R(\delta_1,0)+R(0,-\delta_2)]\frac{t}2
       \big|$}
\\
{ } &  \text{\phantom{if }
           $\le [R(\delta_1,0)-R(0,-\delta_2)]\frac{t}2 $}
\\
     m+\delta_1 & \text{if $\big|x-
       \frac{\delta_2}{2(\delta_1+\delta_2)}-
       [R(\delta_1,0)+R(\delta_1,-\delta_2)]\frac{t}2 \big|$}
\\
{ } &  \text{\phantom{if } $ \le 
       \frac{\delta_2}{2(\delta_1+\delta_2)}-
     [R(\delta_1,0)-R(\delta_1,-\delta_2)]\frac{t}2$}
\\
    m-\delta_2 & \text{otherwise}
   \end{cases}
 \end{equation*}
 It follows that $u(0)=u_d$ and $u(\tau)\equiv m$. Moreover $u$ is a
 piecewise constant weak solution to \eqref{e:1.1}. For a fixed $t\in
 (0,T)$, $u(t)$ has three discontinuity points, where its value jumps
 from $m$ to $m+\delta_1$, from $m+\delta_1$ to $m-\delta_2$ and from
 $m-\delta_2$ to $m$. In particular $H_\tau(u)$ can be calculated by
 \eqref{e:piececost}. The shock between the values $m+\delta_1$ and
 $m-\delta_2$ is entropic by \eqref{e:d2small2}, and thus it gives no
 contributions to the sum \eqref{e:piececost}. By the convexity
 assumption on $f$ in $[m,m+\delta_0]$, the shock between $m$ and
 $m+\delta_1$ is anti-entropic, namely $\rho(v, m+\delta_1,m) \ge
 0$. Moreover the shock between $m-\delta_2$ and $m$ is either
 entropic (if case (A) in Remark~\ref{r:closem} holds) or
 anti-entropic (if case (B) in Remark~\ref{r:closem} holds). Therefore
 \eqref{e:piececost} yields
 \begin{eqnarray*}
H_{\tau}(u) & = & 
\tau \Big[\int \!dv\,\frac{D(v)}{\sigma(v)} 
\frac{\rho^+(v,m,m-\delta_2)}{\delta_2} +
\int \!dv\,\frac{D(v)}{\sigma(v)} 
\frac{\rho^+(v,m+\delta_1,m)}{\delta_1}   \Big]
\\ & = &
\frac{
\Big[\int \!dv\,\frac{D(v)}{\sigma(v)} 
\frac{\rho(v,m,m-\delta_2)}{\delta_2}\Big]^+ +
\int \!dv\,\frac{D(v)}{\sigma(v)} 
\frac{\rho(v,m+\delta_1,m)}{\delta_1}
}{R(\delta_1,0) -R(0,-\delta_2)}
\\ &\le &
\Big[\max_{v\in [m-\delta_0,m+\delta_0]} \frac{D(v)}{\sigma(v)}\Big]
\frac{
\Big[\int \!dv\,
\frac{\rho(v,m,m-\delta_2)}{\delta_2}\Big]^+ +
\int  \!dv\, 
\frac{\rho(v,m+\delta_1,m)}{\delta_1}
}{R(\delta_1,0) -R(0,-\delta_2)}
\end{eqnarray*}
namely \eqref{e:ucost1}.
\end{proof}

\begin{proof}[Proof of Lemma~\ref{l:connect}]
  Fix $\gamma>0$. Recall the definition of $\delta_0 \equiv
  \delta_0(m)$ in Remark~\ref{r:closem}; as noted in
  Remark~\ref{r:closem} we may assume $f$ to be strictly convex in
  $[m,m+\delta_0]$. We thus have $R(\delta_1,0)>f'(m)$,
  $\rho(v,m+\delta_1,m)\ge 0$ for each $\delta_1 \in
  (0,\delta_0)$. Then by explicit computation
  \begin{equation*}
    \lim_{\delta_1 \downarrow 0} \lim_{\delta_2 \downarrow 0} 
\lim_{\delta \downarrow 0}  \sup_{\delta',\delta'' \in [-\delta,\delta]} 
\frac{|C(\delta_1,\delta')| + |C(\delta'',-\delta_2)|+|C(\delta_1,-\delta_2)|}
{R(\delta_1,0) -f'(m)}=0
  \end{equation*}

  In particular, defining $\bar{\delta}_2(\cdot)$ as in
  Lemma~\ref{l:connect1}, there exist $\delta_1 \equiv
  \delta_1^\gamma\in (0,\delta_0)$, $\delta_2 \equiv
  \delta_2^\gamma\in (0,\bar{\delta}_2(\delta_1))$ and $\delta \equiv
  \delta^\gamma \in (0,\delta_1 \wedge \delta_2)$ such that
\begin{equation}
\label{e:ucost1b}
\Big[\max_{v\in [m-\delta_0,m+\delta_0]} \frac{D(v)}{\sigma(v)}\Big]
 \sup_{\delta' \in [-\delta,\delta]} 
\frac{C(\delta_1,0) + |C(\delta',-\delta_2)|+C(\delta_1,-\delta_2)}
{R(\delta_1,0) -f'(m)}\le \frac{\gamma}{8}
\end{equation}
\begin{eqnarray}
\label{e:ucost2}
\nonumber
\frac{R(\delta_1,0)-f'(m)}{4}
& \le &  \frac{R(\delta_1,0)-R(0,-\delta_2)}{2}
\\
& \le & \inf_{\delta',\delta'' \in [-\delta,\delta]}
R(\delta_1,\delta')-R(\delta'',-\delta_2)
\end{eqnarray}
\begin{equation}
\label{e:ucost2b}
\inf_{\delta',\delta'' \in [-\delta,\delta]}
|R(\delta',-\delta_2)-R(\delta',\delta'')|>0
\end{equation}
\begin{equation}
\label{e:ucost2c}
\inf_{\delta',\delta'' \in [-\delta,\delta]}
|R(\delta_1,\delta'')-R(\delta',\delta'')|>0
\end{equation}
\begin{equation}
  \label{e:ucost3}
\rho(v,m-\delta,m+\delta_1)> 0     
   \quad \text{for $v\in (m-\delta,m+\delta_1)$}
\end{equation}
\begin{equation}
  \label{e:ucost4}
|\rho(v,m+\delta,m-\delta_2)|> 0 
  \quad \text{for $v\in (m-\delta_2,m+\delta)$}
\end{equation}

Let now $u_i\in U $ be an arbitrary piecewise constant profile such
that $\|u_i-m\|_{L_{\infty}(\bb T)}\le \delta$. Fix
\begin{equation*}
\upbar{T}:=  \frac{4}{R(\delta_1,0)-f'(m)}
\end{equation*}
By Lemma~\ref{l:connect2} there exists a piecewise constant map
$w\equiv w^{\upbar{T},\gamma/4} :[0,\upbar{T}]\times \bb T \to
[m-\delta,m+\delta]$ such that $w(0)=u_i$ and $H_{\upbar{T}}(w)\le
\gamma/4$.

Let the Lipschitz map $s_1,\,s_2:[0,+\infty)\to \bb T$ be defined as
the solutions to the Cauchy problems
\begin{equation*}
  \begin{cases}
\dot{s}_1(t)= \frac{f(m+\delta_1)-f(w(t,s_1(t)))}{m+\delta_1-w(t,s_1(t))} 
            \equiv R(\delta_1,w(t,s_1(t))-m)  
\\
s_1(0)=0
  \end{cases}
\end{equation*}

\begin{equation*}
  \begin{cases}
\dot{s}_2(t)= \frac{f(m-\delta_2)-f(w(t,s_2(t)))}{m-\delta_2-w(t,s_2(t))}  
\equiv R(w(t,s_2(t))-m,-\delta_2)   
\\
s_2(0)=0
  \end{cases}
\end{equation*}
D espite the right hand sides are discontinuous, these equations are
well-posed since $w$ is piecewise constant and conditions
\eqref{e:ucost2b}-\eqref{e:ucost2c} hold.

With a little abuse of notation, we also denote by $s_1$ and $s_2$ the
lift of $s_1$ and $s_2$ on $\bb R$. Note that, by \eqref{e:ucost2},
$s_1(t)-s_2(t)$ is increasing in $t$ and letting $T>0$ be the first
time $t$ at which $s_1(t)-s_2(t)=1$, we have still by \eqref{e:ucost2}
\begin{equation}
  \label{e:tsmall}
  T \le \upbar{T}
\end{equation}
We also set $x_0:=s_1(T)\equiv s_2(T)\in \bb T$, and let $u_d\equiv
u_d^{\delta_1,\delta_2}$, $\tau\equiv \tau^{\delta_1,\delta_2}$ be
defined as in Lemma~\ref{l:connect1} (with $\delta_1,\,\delta_2$ and
$x_0$ defined as above in this proof), and let $v \in \mc X_\tau$ be
the solution to \eqref{e:1.1} whose existence is proved in
Lemma~\ref{l:connect1}. We finally let
\begin{equation*}
  u^\gamma(t,x):=
  \begin{cases}
m+\delta_1   & \text{if $t \in [0,T] $ and 
        $x\in A_1(t)$}
\\
m-\delta_2   & \text{if $t \in [0,T] $ and 
        $x\in A_2(t)$}
\\
w(t,x) & \text{if $t \in [0,T] $ and 
        $x\not\in A_1(t)\cup A_2(t)$}
\\
v(t-T,x)  & \text{if $t \in [T,T+\tau]$}    
  \end{cases}
\end{equation*}
where for $t\ge 0$
\begin{eqnarray*}
 & & A_1(t):= \big\{x\in \bb T\,:\:\big|x
- \frac 12 \big[s_1(t)+R(\delta_1,-\delta_2)t \big] \big| 
\\ & & 
\phantom{
 A_1(t):= \big\{x\in \bb T\,:\:
}\quad
\le 
\frac 12 \big[s_1(t)
        - R(\delta_1,-\delta_2)t \big]  \big\}
\\
& &   A_2(t):=  \big\{x\in \bb T\,:\:\big|x
 -  \frac 12 \big[R(\delta_1,-\delta_2)t +s_2(t) \big]\big|
\\ & & 
\phantom{
  A_2(t):=  \big\{x\in \bb T\,:\:
} \quad
\le \frac 12 \big[R(\delta_1,-\delta_2)t -s_2(t)\big]  \big\}
\end{eqnarray*}
Note that $u^\gamma_{|{[0,T]}} \in \mc X_T$ is piecewise constant, and
it is the gluing of solutions to \eqref{e:1.1} satisfying the
Rankine-Hugoniot condition at the borders of $\{(t,x)\in [0,T]\times
\bb T\,:\: x\in A_i(t)\}$ (for $i=1,2$). We thus have
$u^\gamma_{|{[0,T]}} \in \mc E_T$ and $u^\gamma \in \mc E_{T+\tau}$.
\begin{figure}[htb!]
\includegraphics[height=80mm,width=60mm]{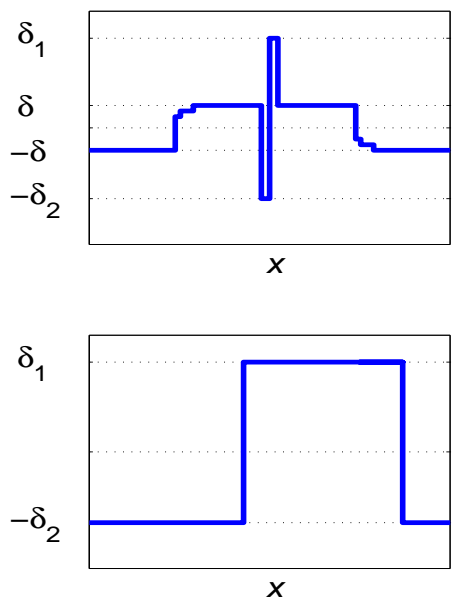}
\caption{In the figure, we have $f(u)=u^3-u$, $m=0$, and the initial
  datum $u_i$ having two jumps between the values $-\delta$ and
  $\delta$. The figure shows $u^\gamma$ at a small time $0<t<T$ and at
  time $T$.  }
\label{f:w53}
\end{figure}
\begin{figure}[htb!]
\includegraphics[height=80mm,width=60mm]{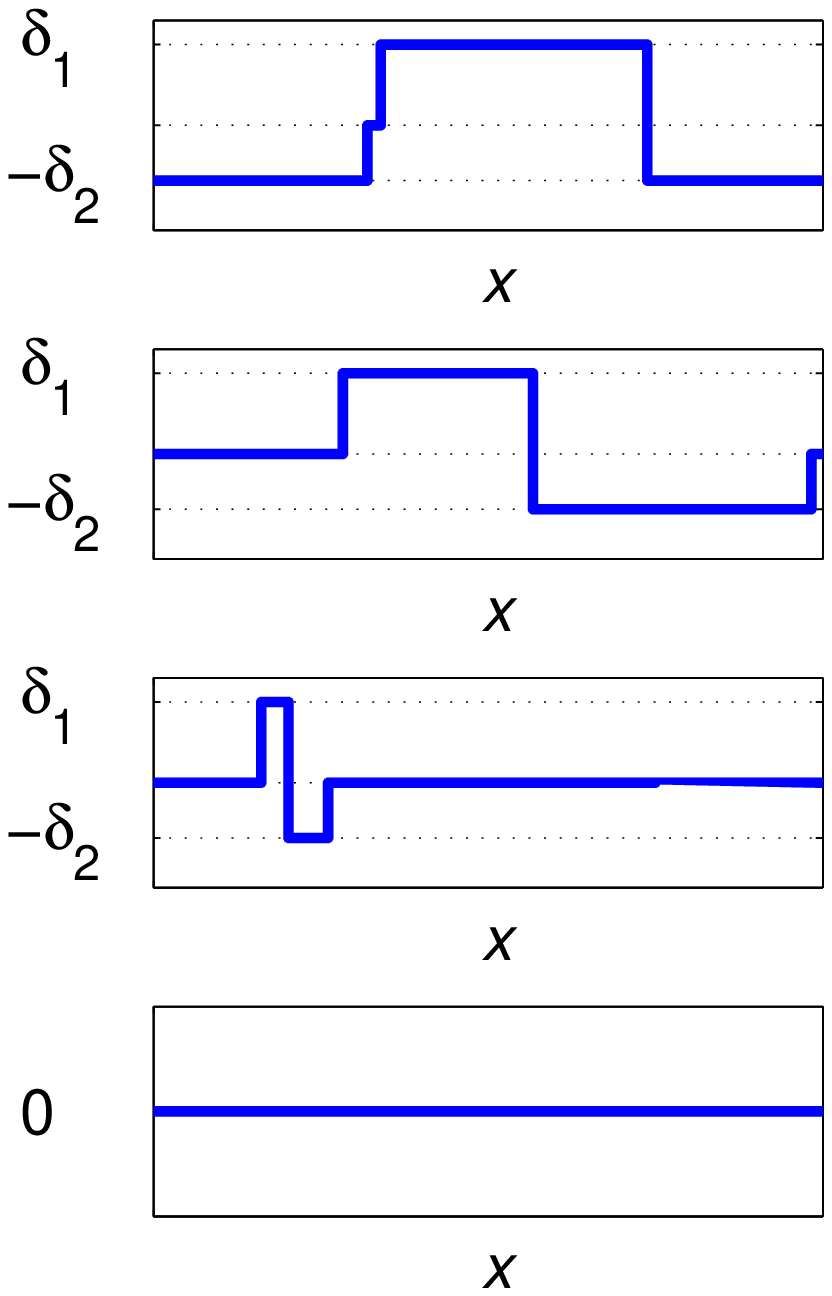}
\caption{In the figure, we have $f(u)=u^3-u$, $m=0$, and the initial
  datum $u_i$ having two jumps between the values $-\delta$
  and $\delta$. The figure shows $u^\gamma$ at different times $t\in
  (T,T+\tau]$.  }
\label{f:w59}
\end{figure}

In order to calculate $H_T(u^\gamma_{|_{[0,T]}})$, we will use
Remark~\ref{r:piececon}. Note that for each $t\in [0,T]$ the set of
discontinuity points of $u^\gamma(t)$ consists of the discontinuity
points of $w(t)$, and the discontinuities at $s_1(t)$, at $s_2(t)$ and
at $R(\delta_1,-\delta_2)t$. Because of assumptions
\eqref{e:ucost2b}-\eqref{e:ucost2c}, there is at most a finite number
of times $t \in [0,T]$ at which $s_1(t)$ and $s_2(t)$ may overlap with
a discontinuity of $w(t,\cdot)$. Note that assumption \eqref{e:ucost3}
implies $\rho(v,w,m+\delta_1)\le 0$, for each $v\in [-1,1]$ and $w \in
[m-\delta,m+\delta]$, so that the shock of $u^\gamma$ at $s_1$ is
entropic and it does not appear in the sum
\eqref{e:piececost}. Conversely $\rho(v,m-\delta_2,m+\delta_1)\ge 0$,
so that the shock along the curve $t \mapsto R(\delta_1,-\delta_2)t$
appears in the sum \eqref{e:piececost}. Finally, by \eqref{e:ucost4},
$\rho(v,w(t,s_2(t),m-\delta_2)$ is either negative or positive for
each $t\in [0,T]$ and $v\in [-1,1]$, depending whether case (A) or (B)
of Remark~\ref{r:closem} holds for $f$. By Remark~\ref{r:add} and
recalling that $v$ satisfies \eqref{e:ucost1}
\begin{eqnarray*}
&  &  H_{T+\tau}(u^\gamma)  = 
  H_\tau(v) + H_T(u^\gamma_{|{[0,T]}})
\le H_\tau(v) + H_{\upbar{T}}(w) 
\\ & & \quad
+\int_0^T\,dt\,\Big[\int
 \!dv\,\frac{D(v)}{\sigma(v)} 
\frac{\rho^+(v,m-\delta_2,w(t,s_2(t))}{\delta_2} +
\\ & & \quad \phantom{
+\int_0^T\,dt\,\Big[
}
\int \!dv\,\frac{D(v)}{\sigma(v)} 
\frac{\rho^+(v,m+\delta_1,m-\delta_2)} {\delta_1+\delta_2}  \Big]
\\ & \le & 
\frac{\gamma}4 
+  \Big[\max_{v\in [m-\delta_0,m+\delta_0]} \frac{D(v)}{\sigma(v)}\Big] 
  \frac{C(\delta_1,0)^+ + C(0,-\delta_2)^+}{|R(\delta_1,0) -R(0,-\delta_2)|}
\\ & & 
+ \int_0^T\,dt\,\Big[\int
 \!dv\,\frac{D(v)}{\sigma(v)} 
\frac{\rho(v,m-\delta_2,w(t,s_2(t))}{\delta_2}\Big]^+
\\ & & 
+ \int \! dv\, \frac{D(v)}{\sigma(v)} 
\frac{\rho(v,m+\delta_1,m-\delta_2)} {\delta_1+\delta_2}
\\ & \le &
\frac{\gamma}4
+  \Big[\max_{v\in [m-\delta_0,m+\delta_0]} \frac{D(v)}{\sigma(v)}\Big] 
\\ & & \quad
\Big[ \frac{C(\delta_1,0)^+ + C(0,-\delta_2)^+}
                          {|R(\delta_1,0) -R(0,-\delta_2)|}
+ T \, C(\delta_1,-\delta_2)
+ T \, \sup_{\delta'\in [-\delta,\delta]} C(\delta',-\delta_2)^+ \Big]
\end{eqnarray*}
By \eqref{e:tsmall} and \eqref{e:ucost2} we thus obtain
\begin{eqnarray*}
& &  H_{T+\tau}(u^\gamma)) \le \frac{\gamma}4 + 
\Big[\max_{v\in [m-\delta_0,m+\delta_0]} \frac{D(v)}{\sigma(v)}\Big]
\\ & & \,
 \frac{2 C(\delta_1,0)^+ + 2 C(0,-\delta_2)^+ + 4 C(\delta_1,-\delta_2)
+ 4 \sup_{\delta'\in [-\delta,\delta]} C(\delta',-\delta_2)^+}
 {R(\delta_1,0) -f'(m)}
\\ & & 
\le  
\frac{\gamma}4 
+ 6 \Big[\max_{v\in [m-\delta_0,m+\delta_0]} \frac{D(v)}{\sigma(v)}\Big] 
\\ & & \qquad \qquad \qquad 
\frac{C(\delta_1,0) + C(\delta_1,-\delta_2)
+ \sup_{\delta'\in [-\delta,\delta]} |C(\delta',-\delta_2)|}
 {R(\delta_1,0) -f'(m)}
\end{eqnarray*}
Therefore $H_{T+\tau}(u^\gamma)) \le \gamma$ by \eqref{e:ucost1b}.
\end{proof}

\begin{proof}[Proof of Theorem~\ref{t:quasipot}-(iii)]
  We assume $\int_{\bb T}dx\, u_f(x)= m$, the proof being trivial
  otherwise. Since $H^{JV}_T \le H_T$ we have $V^{JV} \le W_m(u_f)$ by
  Theorem~\ref{t:quasipot}-(ii). The converse inequality is obtained
  by taking $\varphi \equiv 1$ in the very definition of $H^{JV}$.
\end{proof}

\textit{Acknowledgment:} We are indebted to Lorenzo Bertini and Matteo Novaga
for enlighting discussions.

\end{document}